
\documentclass[reqno,a4paper]{amsart}
\usepackage{amssymb}
\usepackage{amsmath}
\setlength{\topmargin}{0pt}
\setlength{\textwidth}{12.5cm}
\setlength{\parindent}{0.8cm}
\frenchspacing

\begin{document} 
\newtheorem{prop}{Proposition}[section]
\newtheorem{Def}{Definition}[section]
\newtheorem{theorem}{Theorem}[section]
\newtheorem{lemma}{Lemma}[section] \newtheorem{Cor}{Corollary}[section]

\title[Yang-Mills and Yang-Mills-Higgs]{\bf Local well-posedness for the (n+1)-dimensional Yang-Mills and Yang-Mills-Higgs system in temporal gauge}
\author[Hartmut Pecher]{
{\bf Hartmut Pecher}\\
Fakult\"at f\"ur  Mathematik und Naturwissenschaften\\
Bergische Universit\"at Wuppertal\\
Gau{\ss}str.  20\\
42119 Wuppertal\\
Germany\\
e-mail {\tt pecher@math.uni-wuppertal.de}}
\date{}

\begin{abstract}
The Yang-Mills and Yang-Mills-Higgs equations in temporal gauge are locally well-posed for small and rough initial data, which can be shown using the null structure of the critical bilinear terms. This carries over a similar result by Tao for the Yang-Mills equations in the (3+1)-dimensional case to the more general Yang-Mills-Higgs system and to general dimensions.
\end{abstract}
\maketitle
\renewcommand{\thefootnote}{\fnsymbol{footnote}}
\footnotetext{\hspace{-1.5em}{\it 2010 Mathematics Subject Classification:} 
35Q40, 35L70 \\
{\it Key words and phrases:} Yang-Mills, Yang-Mills-Higgs, 
local well-posedness, temporal gauge}
\normalsize 
\setcounter{section}{0}
\section{Introduction and main results}
\noindent 
Let $\mathcal{G}$ be the Lie goup $SO(n,\mathbb{R})$ (the group of orthogonal matrices of determinant 1) or $SU(n,\mathbb{C})$ (the group of unitary matrices of determinant 1) and $g$ its Lie algebra $so(n,\mathbb{R})$ (the algebra of trace-free skew symmetric matrices) or $su(n,\mathbb{C})$ (the algebra of trace-free skew hermitian matrices) with Lie bracket $[X,Y] = XY-YX$ (the matrix commutator). 
For given  $A_{\alpha}: \mathbb{R}^{1+n} \rightarrow g $ we define the curvature by
$$ F_{\alpha \beta} = \partial_{\alpha} A_{\beta} - \partial_{\beta} A_{\alpha} + [A_{\alpha},A_{\beta}] \, , $$
where $\alpha,\beta \in \{0,1,...,n\}$ and $D_{\alpha} = \partial_{\alpha} + [A_{\alpha}, \cdot \,]$ .

Then the Yang-Mills system is given by
\begin{equation}
\label{1'}
D^{\alpha} F_{\alpha \beta}  = 0
\end{equation}
in Minkowski space $\mathbb{R}^{1+n} = \mathbb{R}_t \times \mathbb{R}^n_x$ , where $n \ge 3$, with metric $diag(-1,1,...,1)$. Greek indices run over $\{0,1,...,n\}$, Latin indices over $\{1,...,n\}$, and the usual summation convention is used.  
We use the notation $\partial_{\mu} = \frac{\partial}{\partial x_{\mu}}$, where we write $(x^0,x^1,...,x^n)=(t,x^1,...,x^n)$ and also $\partial_0 = \partial_t$.

Setting $\beta =0$ in (\ref{1'}) we obtain the Gauss-law constraint
\begin{equation}
\nonumber
\partial^j F_{j 0} + [A^j,F_{j0} ]=0 \, .
\end{equation}
The system is gauge invariant. Given a sufficiently smooth function $U: {\mathbb R}^{1+n} \rightarrow \mathcal{G}$ we define the gauge transformation $T$ by $T A_0 = A_0'$ , 
$T(A_1,...,A_n) = (A_1',...,A_n'),$ where
\begin{align*}
A_{\alpha} & \longmapsto A_{\alpha}' = U A_{\alpha} U^{-1} - (\partial_{\alpha} U) U^{-1}  \, . 
\end{align*}
It is  well-known that if  $(A_0,...A_n)$ satisfies (\ref{1'}) so does $(A_0',...,A_n')$.

Hence we may impose a gauge condition. We exlusively study the temporal gauge $A_0=0$.

The Yang-Mills-Higgs system is given by
\begin{align}
\label{1}
D^{\alpha} F_{\alpha \beta} & = [D_{\beta} \phi, \phi ] \\
\label{2}
D^{\alpha} D_{\alpha} \phi & = |\phi|^{N-1} \phi \, .
\end{align}
Setting $\beta =0$ in (\ref{1}) we obtain the Gauss-law constraint
\begin{equation}
\nonumber
\partial^j F_{j 0} = -[A^j,F_{j0}] + [D_0 \phi,\phi ] \, 
\end{equation}
where $\phi: \mathbb{R}^{1+n} \rightarrow g $ .
This system is also gauge invariant. Similarly as above we define the gauge transformation $T$ by $T A_0 = A_0'$ , 
$T(A_1,...,A_n) = (A_1',...,A_n')$ , $T\phi = \phi'$ , where
\begin{align*}
A_{\alpha} & \longmapsto A_{\alpha}' = U A_{\alpha} U^{-1} - (\partial_{\alpha} U) U^{-1} \\
\phi & \longmapsto \phi' = U \phi U^{-1} \, . 
\end{align*}
If $(A_0,...,A_n,\phi)$ satisfies (\ref{1}),(\ref{2}), so does $(A_0',...,A_n',\phi')$.

Some historical remarks:
Concerning the well-posedness problem for the Yang-Mills equation in three space dimensions Klainerman and Machedon \cite{KM1} proved global well-posedness in energy space in the temporal gauge.  Selberg and Tesfahun \cite{ST} proved local well-posedness for finite energy data in Lorenz gauge. This result was improved by Tesfahun \cite{Te} to data without finite energy, namely for $(A(0),(\partial_t A)(0) \in H^s \times H^{s-1}$ with $s > \frac{6}{7}$. Local well-posedness in energy space was given by Oh \cite{O} using a new gauge, namely the Yang-Mills heat flow. He was also able to shows that this solution can be globally extended \cite{O1}. Tao \cite{T1} showed local well-posedness for small data in $H^s \times H^{s-1}$ for $ s > \frac{3}{4}$ in temporal gauge.
In space dimension four where the energy space is critical with respect to scaling Klainerman and Tataru \cite{KT} proved  small data local well-posedness for a closely related model problem in Coulomb gauge for $s>1$. Very recently this result was significantly improved by Krieger and Tataru \cite{KrT}, who were able to show global well-posedness for data with small energy. Sterbenz \cite{St} considered also the four-dimensional case in Lorenz gauge and proved global well-posedness for small data in Besov space $\dot{B}^{1,1} \times \dot(B)^{0,1}$. In high space dimension $n \ge 6$ (and $n$ even)  Krieger and Sterbenz \cite{KrSt} proved global well-posedness for small data in the critical Sobolev space.

Concerning the more general Yang-Mills-Higgs system Eardley and Moncrief \cite{EM},\cite{EM1} proved local and global well-posedness for initial data  $(A(0),(\partial_t A)(0)$ and $(\phi(0),(\partial_t \phi)(0)$) in $H^s \times H^{s-1}$ and $s \ge 2$. In Coulomb gauge global well-posedness in energy space $H^1 \times L^2$ was shown by Keel \cite{K}. Recently Tesfahun \cite{Te1} considered the problem in Lorenz gauge and obtained local well-posedness in energy space.

We now study the Yang-Mills equation and also the Yang-Mills-Higgs system in arbitrary space dimension $n \ge 3$ in temporal gauge for low regularity data, which in three space dimension not necessarily have finite energy and which fulfill a smallness assumption, which reads in the Yang-Mills-Higgs case as follows
$$ \|A(0)\|_{H^s} + \|(\partial_t A)(0)\|_{H^{s-1}} + \|\phi(0)\|_{H^s} + \|(\partial_t \phi)(0)\|_{H^{s-1}} < \epsilon $$
with a sufficiently small $\epsilon > 0$ , under the assumption $s> \frac{3}{4}$ for $n=3$ and $s > \frac{n}{2}-\frac{5}{8}-\frac{5}{8(2n-1)}$ in general dimension $n\ge3$. We obtain a solution which satisfies $A,\phi \in C^0([0,1],H^s) \cap C^1([0,1],H^{s-1})$.
A corresponding result holds for the Yang-Mills equation.
Uniqueness holds in a certain subspace of Bourgain-Klainerman-Machedon type.
The basis for our results is Tao's paper \cite{T1}. We carry over his three-dimensional result for the Yang-Mills equation to the more general Yang-Mills-Higgs equations and to arbitrary dimensions $n\ge 3$. The result relies on the null structure of all the critical bilinear terms. We review this null structure which was partly detected already by Klainerman-Machedon in the Yang-Mills case \cite{KM1} and by Tesfahun \cite{Te} for Yang-Mills-Higgs in the situation of the Lorenz gauge. The necessary estimates for the nonlinear terms in spaces of $X^{s,b}$-type in the (3+1)-dimensional case then reduce essentially to Tao's result \cite{T1}. One of these estimates is responsible for the small data assumption. Because these local well-posedness results (Prop. \ref{Prop'}) and (Prop. \ref{Prop}) can initially only be shown under the condition that the curl-free part $A^{cf}$ of $A$ (as defined below) vanishes for $t=0$ we have to show that this assumption can be removed by a suitable gauge transformation (Lemma \ref{Lemma}) which preserves the regularity of the solution. This uses an idea of Keel and Tao \cite{T1}.

Our main results read as follows:
\begin{theorem}
\label{Theorem1'}
Let $ n \ge 3$ , $s > \frac{n}{2}-\frac{5}{8}-\frac{5}{8(2n-1)}$ . Let $a \in H^s({\mathbb R}^n)$ , $a' \in H^{s-1}({\mathbb R}^n)$  be given, where $a=(a_1,...,a_n)$ , $a' =(a_1',...,a_n')$ ,  satisfying the compatability condition $\partial^j a_j' = - [a^j,a_j']$. Assume
$\, \|a\|_{H^s} + \|a'\|_{H^{s-1}}  \le \epsilon \, , $
where $\epsilon > 0$ is sufficiently small. Then the Yang-Mills equation (\ref{1'}) in temporal gauge $A_0=0$ with initial conditions
$ A(0)=a \, , \, (\partial_t A)(0) = a'  \, ,$
where $A=(A_1,...,A_n)$,
has a unique local solution $A= A_+^{df} + A_-^{df} +A^{cf}$ , where
$$ A^{df}_{\pm} \in X^{s,\frac{3}{4}+}_{\pm}[0,1] \, , \, A^{cf} \in X^{s+\alpha,\frac{1}{2}+}_{\tau=0}[0,1] \, , \, \partial_t A^{cf} \in C^0([0,1],H^{s-1}) \, .$$
These spaces are defined below and $\alpha = \frac{3n+1}{8(2n-1)}$. This solution fulfills
$$ A \in C^0([0,1],H^s({\mathbb R}^n)) \cap C^1([0,1],H^{s-1}({\mathbb R}^n)) \, . $$
\end{theorem}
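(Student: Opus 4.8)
The plan is to reduce Theorem~\ref{Theorem1'} to two separate ingredients: a \emph{conditional} local well-posedness statement (Proposition~\ref{Prop'}) for the temporal-gauge Yang--Mills system under the extra hypothesis that the curl-free part of the data vanishes, and a gauge-transformation lemma (Lemma~\ref{Lemma}) that reduces the general case to that one. So the work splits into (i) setting up and closing an $X^{s,b}$-type iteration for the constrained problem, and (ii) removing the constraint by a gauge transformation that preserves the solution class.

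First I would write \eqref{1'} out in the temporal gauge $A_0=0$. Since then $F_{0j}=\partial_t A_j$, the $\beta=j$ equations become
\[
\Box A_j = \partial_j(\partial^k A_k) - \partial^k[A_k,A_j] - [A^k,F_{kj}],
\]
with $\Box=-\partial_t^2+\Delta$, while the $\beta=0$ equation is the Gauss constraint $\partial_t(\partial^j A_j) = -[A^j,\partial_t A_j]$ (whose $t=0$ trace is exactly the stated compatibility condition $\partial^j a_j'=-[a^j,a_j']$). Performing the Helmholtz decomposition $A_j=A_j^{df}+A_j^{cf}$ with $\partial^j A_j^{df}=0$ and $A_j^{cf}=\partial_j\Delta^{-1}(\partial^k A_k)$, the curl-free part is governed by the first-order-in-time equation obtained from the Gauss constraint, with purely quadratic (and cubic) right-hand side, and is recovered by integrating in $t$ from its initial value; this is why it is assigned to the ``elliptic'' space $X^{s+\alpha,\frac12+}_{\tau=0}$ and is allowed to gain $\alpha$ derivatives. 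Applying the Leray projection to the remaining equation removes the bad term $\partial_j(\partial^k A_k)$, so the divergence-free part solves a genuine wave equation; splitting $A^{df}=A^{df}_++A^{df}_-$ into half-waves $(\partial_t\mp i|D|)A^{df}_\pm=\cdots$ puts it into $X^{s,\frac34+}_\pm$. The next step is to isolate the null structure: after the Hodge decomposition the quadratic terms $[A^k,\partial_k A_j-\partial_j A_k]$ and $[A^k,[A_k,A_j]]$ must be rewritten so that the scaling-critical $A^{df}\!\cdot\! A^{df}$ interactions carry $Q_{ij}$-type null forms (this null structure goes back to Klainerman--Machedon for Yang--Mills and to Tesfahun for Yang--Mills--Higgs), the interactions with at least one $A^{cf}$ factor being subcritical by the gained $\alpha$ derivatives, and the cubic terms being lower order.

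With this in hand I would run a contraction-mapping argument for the Duhamel formulation on the product space $X^{s,\frac34+}_+[0,1]\times X^{s,\frac34+}_-[0,1]\times X^{s+\alpha,\frac12+}_{\tau=0}[0,1]$, restricted to a small ball whose radius is comparable to $\epsilon$. In dimension $n=3$ all the required bilinear $X^{s,b}$ estimates reduce essentially to Tao's estimates in \cite{T1}; for general $n\ge3$ they are the genuinely new input, and the admissible range $s>\frac n2-\frac58-\frac5{8(2n-1)}$ together with $\alpha=\frac{3n+1}{8(2n-1)}$ is precisely what makes them close. One of these estimates gains no power of the length of the time interval, so it only closes for small norms — this is the source of the smallness assumption. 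Once the iteration produces a solution for data with $a^{cf}=0$, I would invoke Lemma~\ref{Lemma}: given general $a\in H^s$, solve (following the Keel--Tao argument in \cite{T1}) a nonlinear elliptic system for a \emph{time-independent} gauge transformation $U:\mathbb{R}^n\to\mathcal{G}$ — which preserves $A_0=0$ — such that the curl-free part of $UaU^{-1}-(\partial U)U^{-1}$ vanishes, checking that $U$ and $U^{-1}$ act boundedly on the data spaces and that the transformed curvature data still satisfies the smallness and compatibility conditions (after shrinking $\epsilon$ if needed); then apply Proposition~\ref{Prop'} and undo the gauge transformation, which preserves the regularity class. Finally, $A\in C^0([0,1],H^s)\cap C^1([0,1],H^{s-1})$ follows from the embeddings $X^{s,b}_\pm\hookrightarrow C^0H^s$ and, using the equation, $\hookrightarrow C^1H^{s-1}$ for $b>\frac12$, together with $A^{cf}\in X^{s+\alpha,\frac12+}_{\tau=0}\subset C^0H^{s+\alpha}\subset C^0H^s$ and $\partial_t A^{cf}\in C^0H^{s-1}$.

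The main obstacle I expect is the family of multilinear $X^{s,b}$ estimates in general dimension: carrying the null forms through the Hodge decomposition and proving the bilinear bounds with the sharp exponents for all $n\ge3$, in particular pinning down and handling the single estimate that forces smallness of the data. The second delicate point is Lemma~\ref{Lemma}, where one must solve a nonlinear elliptic system for $U$ at fixed time and control it in low-regularity Sobolev norms without destroying the compatibility condition or the temporal gauge.
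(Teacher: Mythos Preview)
Your proposal is correct and follows the paper's route: establish Proposition~\ref{Prop'} for data with $A^{cf}(0)=0$ via an $X^{s,b}$ contraction (exploiting the $Q_{ij}$ null structure of the $A^{df}$--$A^{df}$ interactions and the extra $\alpha$ derivatives carried by $A^{cf}$), then invoke Lemma~\ref{Lemma} to gauge away $A^{cf}(0)$ and pull the solution back by $T^{-1}$. The only refinements worth noting are that in the paper $U$ is not obtained by solving a single nonlinear elliptic system but is built iteratively as $U=\prod_{l=\infty}^{1}\exp V_l$ with $V_l=-(-\Delta)^{-1}\operatorname{div}(T_{l-1}A)(0)$, each step shrinking $\|(T_kA)^{cf}(0)\|_{H^s}$ by a factor $\sim\epsilon^{1/2}$, and Lemma~\ref{Lemma} must (and does) show that $T$ and $T^{-1}$ preserve the \emph{solution} spaces $X^{s,\frac34+}_\pm[0,1]$ and $X^{s+\alpha,\frac12+}_{\tau=0}[0,1]$, not merely the data spaces, since they are applied to the solution produced by Proposition~\ref{Prop'}.
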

{\bf Remark:} In the (3+1)-dimensional case we assume $ s > \frac{3}{4} $ and $\alpha = \frac{1}{4}$ , so that data without finite energy are admissible. This is Tao's result \cite{T1}.
\begin{theorem}
\label{Theorem1}
Let $ n \ge 3$ , $s > \frac{n}{2}-\frac{5}{8}-\frac{5}{8(2n-1)}$ , and  $2 \le N < 1+\frac{7}{4(\frac{n}{2}-s)}$ , if $s < \frac{n}{2}$ , and $N< \infty$ , if $s \ge \frac{n}{2}$ . Here $N$ is an odd integer, or $N \in\mathbb{N}$ with $N > s$. Let $a \in H^s({\mathbb R}^n)$ , $a' \in H^{s-1}({\mathbb R}^n)$ , $\phi_0 \in H^s({\mathbb R}^3)$ , $\phi_1 \in H^{s-1}({\mathbb R}^3)$ be given, where $a=(a_1,...,a_n)$ , $a' =(a_1',...,a_n')$ ,  satisfying $\partial^j a_j' = -\partial^j a_j'- [\phi_1,\phi_0]$. Assume
$$ \|a\|_{H^s} + \|a'\|_{H^{s-1}} + \|\phi_0\|_{H^s} + \|\phi_1\|_{H^{s-1}} \le \epsilon \, , $$
where $\epsilon > 0$ is sufficiently small. Then the Yang-Mills-Higgs equations (\ref{1}) , (\ref{2}) in temporal gauge $A_0=0$ with initial conditions
$$ A(0)=a \, , \, (\partial_t A)(0) = a' \, , \, \phi(0)=\phi_0 \, , \, (\partial_t \phi)(0) = \phi_1 \, ,$$
where $A=(A_1,...,A_n)$,
has a unique local solution  $A= A_+^{df} + A_-^{df} +A^{cf}$ and $\phi = \phi_+ + \phi_-$ , where
$$ A^{df}_{\pm} \in X^{s,\frac{3}{4}+}_{\pm}[0,1]  ,  A^{cf} \in X^{s+\alpha,\frac{1}{2}+}_{\tau=0}[0,1]  ,  \partial_t A^{cf} \in C^0([0,1],H^{s-1})  , 
  \phi_{\pm} \in  X^{s,\frac{3}{4}+}_{\pm}[0,1] \, , $$
where these spaces are defined below and $\alpha = \frac{3n+1}{8(2n-1)}$. This solution fulfills
$$ A \, , \, \phi \in C^0([0,1],H^s({\mathbb R}^n)) \cap C^1([0,1],H^{s-1}({\mathbb R}^n)) \, . $$
\end{theorem}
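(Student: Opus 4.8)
The plan is to reduce Theorem~\ref{Theorem1} to the local well-posedness result obtained under the extra hypothesis $A^{cf}(0)=0$ (Proposition~\ref{Prop}), and then to remove that hypothesis by a gauge transformation, exactly as Tao does in the pure Yang--Mills case. First I would decompose the spatial connection $A$ into its divergence-free (curl) part $A^{df}$ and its curl-free part $A^{cf}$, using the Hodge projection onto divergence-free vector fields; correspondingly $A^{df}$ is split into the half-wave components $A_\pm^{df}$. Rewriting the temporal-gauge Yang--Mills--Higgs system \eqref{1}, \eqref{2} in these variables produces a system of wave equations for $A_\pm^{df}$ and $\phi_\pm$ together with a first-order (in time) elliptic-type equation for $A^{cf}$ coming from the spatial components of \eqref{1} after subtracting the Gauss constraint; the right-hand sides are the familiar quadratic and cubic nonlinearities $[A,\partial A]$, $[A,[A,A]]$, $[\phi,\partial\phi]$, $[A,[A,\phi]]$, $|\phi|^{N-1}\phi$, etc. The key structural input, reviewed earlier in the paper following Klainerman--Machedon and Tesfahun, is that every \emph{critical} bilinear term carries a null structure after this decomposition, so the nonlinear estimates in the iteration spaces $X^{s,b}_\pm[0,1]$ and $X^{s+\alpha,\frac12+}_{\tau=0}[0,1]$ close for $s$ above the stated threshold; the cubic and higher terms, including the Higgs self-interaction $|\phi|^{N-1}\phi$, are handled by Sobolev embedding and product estimates in $X^{s,b}$-spaces, and it is precisely one of these estimates (the one forcing $N < 1+\frac{7}{4(n/2-s)}$ when $s<n/2$) together with the low-regularity bilinear estimate that dictates the small-data assumption. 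A standard contraction-mapping argument on a small ball in the product of these spaces then yields, for data with $A^{cf}(0)=0$, a unique solution with the asserted regularity, and the embedding $X^{s,b}_\pm[0,1]\hookrightarrow C^0([0,1],H^s)\cap C^1([0,1],H^{s-1})$ (for $b>1/2$; here $b=\frac34+$) gives the continuity-in-time statement, likewise for $A^{cf}$ via its equation.

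The second and genuinely new step is to dispose of the auxiliary condition $A^{cf}(0)=0$. Given arbitrary data $(a,a',\phi_0,\phi_1)$ satisfying the compatibility (Gauss) constraint and the smallness bound, I would construct a time-independent gauge transformation $U=U(x)$, i.e. solve $(\partial^j U) U^{-1} = (\text{curl-free part of } a)$, so that in the new gauge the transformed connection has vanishing curl-free part at $t=0$; because the transformation is time-independent it preserves the temporal gauge $A_0=0$. One must check that $U$ can be chosen with enough regularity that $a\mapsto a'$ and $\phi_0,\phi_1$ transform into data still lying in $H^s\times H^{s-1}$ with a comparable (still small) norm — this is where the smallness of $\epsilon$ and the elliptic gain in defining $A^{cf}$ (the extra $\alpha$ derivatives) are used, following the Keel--Tao argument cited in the introduction. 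Applying Proposition~\ref{Prop} in the new gauge produces a solution there, and transforming back by $U^{-1}$ yields a solution of the original system with the original data; since $U$ is smooth in $x$ and independent of $t$, conjugation by $U$ preserves all the stated function spaces and the $C^0H^s\cap C^1H^{s-1}$ regularity.

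Finally, uniqueness: within the Bourgain--Klainerman--Machedon-type space in which the contraction is performed the solution is unique by the fixed-point argument, and one checks that this uniqueness class is gauge-invariant under the (smooth, time-independent) transformations used, so the statement descends to the original formulation. The Yang--Mills theorem (Theorem~\ref{Theorem1'}) is the special case $\phi\equiv 0$, for which the Higgs equation and all $\phi$-dependent nonlinearities drop out and the same scheme applies verbatim.

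\medskip
\noindent\textbf{Main obstacle.} I expect the principal difficulty to be twofold. The harder analytic point is verifying the full set of nonlinear estimates in general dimension $n$: one must track exactly how the null-form gain and the $X^{s,b}$ multiplier estimates degrade as $n$ grows, which is what produces the peculiar threshold $s > \frac n2 - \frac58 - \frac{5}{8(2n-1)}$ and the precise value $\alpha = \frac{3n+1}{8(2n-1)}$; in $n=3$ this must collapse to Tao's $s>\frac34$, $\alpha=\frac14$. The harder conceptual point is the gauge-transformation step: one has to solve the defining ODE/elliptic system for $U$ with quantitative control, show the transformed data remain in the correct space with small norm, and confirm that conjugation does not destroy the delicate $X^{s,b}_\pm$ and $X^{s+\alpha,\frac12+}_{\tau=0}$ structure — in particular that the curl-free part of the transformed connection genuinely vanishes at $t=0$ and retains the $\alpha$-derivative gain afterwards.
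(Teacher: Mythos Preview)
Your proposal is correct and follows essentially the same architecture as the paper: reduce to Proposition~\ref{Prop} (the $A^{cf}(0)=0$ case, handled by contraction in the stated $X^{s,b}$-type spaces using the null structure), then remove the constraint $A^{cf}(0)=0$ via a time-independent gauge transformation (Lemma~\ref{Lemma}), and finally pull back by $T^{-1}$, which preserves all the relevant spaces.

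One point where your sketch is slightly too optimistic: you write that you would ``solve $(\partial^{j}U)U^{-1}=(\text{curl-free part of }a)$'' so that $(TA)^{cf}(0)=0$. In the non-abelian setting a single such $U=\exp V$ with $\nabla V=a^{cf}$ does \emph{not} annihilate the curl-free part, because the conjugation term $UaU^{-1}-a$ contributes a new curl-free piece of size $O(\epsilon\delta)$. The paper's Lemma~\ref{Lemma} therefore builds $U$ as an infinite product $U=\prod_{l=\infty}^{1}\exp V_{l}$, with $\|V_{l}\|_{X}\lesssim\epsilon^{(l+1)/2}$, and shows convergence in the seminorm $\|f\|_{X}=\|\nabla f\|_{H^{s}}$; this iteration is what actually forces $(TA)^{cf}(0)=0$ while keeping the transformed data small. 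Also, $U$ is not smooth but only of class $\nabla U\in H^{s}$, so the preservation of the $X^{s,\frac34+}_{\pm}$ and $X^{s+\alpha,\frac12+}_{\tau=0}$ spaces under conjugation requires the product estimate of Lemma~\ref{Lemma1} rather than mere smoothness. Finally, a minor correction: the small-data restriction in the paper is attributed specifically to estimate~(\ref{18}) (the mixed $A^{cf}$--$A^{df}$ bilinear bound), not to the Higgs self-interaction; the condition on $N$ comes separately from estimate~(\ref{32}).
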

Remark: The assumption $N>s$ or $N$ odd ensures that the function $f(s) = |s|^{N-1} s$ for $s\in \mathbb{R}$ is smooth enough at the origin.

We denote the Fourier transform with respect to space and time and with respect to space by $\,\,\widehat{}\,\,$ and ${\mathcal F}$, respectively. The operator
$|\nabla|^{\alpha}$ is defined by $({\mathcal F}(|\nabla|^{\alpha} f))$ $(\xi) = |\xi|^{\alpha} ({\mathcal F}f)(\xi)$ and similarly $ \langle \nabla \rangle^{\alpha}$. $\Box = \partial_t^2 - \Delta$ is the d'Alembert operator.\\
$a+ := a + \epsilon$ for a sufficiently small $\epsilon >0$ , so that $a<a+<a++$ , and similarly $a--<a-<a$ , and $\langle \cdot \rangle := (1+|\cdot|^2)^{\frac{1}{2}}$ .

The standard spaces $X^{s,b}_{\pm}$ of Bourgain-Klainerman-Machedon type belonging to the half waves are the completion of the Schwarz space  $\mathcal{S}({\mathbb R}^4)$ with respect to the norm
$$ \|u\|_{X^{s,b}_{\pm}} = \| \langle \xi \rangle^s \langle  \tau \mp |\xi| \rangle^b \widehat{u}(\tau,\xi) \|_{L^2_{\tau \xi}} \, . $$ 
Similarly we define the wave-Sobolev spaces $X^{s,b}_{|\tau|=|\xi|}$ with norm
$$ \|u\|_{X^{s,b}_{|\tau|=|\xi|}} = \| \langle \xi \rangle^s \langle  |\tau| - |\xi| \rangle^b \widehat{u}(\tau,\xi) \|_{L^2_{\tau \xi}}  $$ and also $X^{s,b}_{\tau =0}$ with norm 
$$\|u\|_{X^{s,b}_{\tau=0}} = \| \langle \xi \rangle^s \langle  \tau  \rangle^b \widehat{u}(\tau,\xi) \|_{L^2_{\tau \xi}} \, .$$
We also define $X^{s,b}_{\pm}[0,T]$ as the space of the restrictions of functions in $X^{s,b}_{\pm}$ to $[0,T] \times \mathbb{R}^3$ and similarly $X^{s,b}_{|\tau| = |\xi|}[0,T]$ and $X^{s,b}_{\tau =0}[0,T]$. We frequently use the estimates $\|u\|_{X^{s,b}_{\pm}} \le \|u\|_{X^{s,b}_{|\tau|=|\xi|}}$ for $b \le 0$ and the reverse estimate for $b \ge 0$. \\

\section{Reformulation of the problem and null structure}

In temporal gauge $A_0=0$ the system (\ref{1'}) is equivalent to
\begin{align*}
\partial_t \,div \, A & = - [A_i,\partial_t A^i] \\
\Box A_j & = \partial_j \, div A - [div A,A_j] - 2[A^i,\partial_i A_j] + [A^i,\partial_j A_i] - [A^i,[A_i,A_j]] 
\end{align*}
and the Gauss constraint  reduces to
$$\partial^j \partial_t A_j = - [A_j,\partial_t A^j] \, . $$

Similarly in temporal gauge $A_0=0$ the system (\ref{1}),(\ref{2}) is equivalent to
\begin{align*}
\partial_t \,div \, A & = -[\partial_t \phi , \phi] - [A_i,\partial_t A^i] \\
\Box A_j & = \partial_j \, div A - [div A,A_j] - 2[A^i,\partial_i A_j] + [A^i,\partial_j A_i] - [\phi,\partial_j \phi] - [A^i,[A_i,A_j]] \\
&\hspace{1em} - [\phi,[A_j,\phi]] \\
\Box \phi & = -[div A,\phi] - 2[A_i,\partial^i \phi] - [A^i,[A_i,\phi]] + |\phi|^{N-1} \phi
\end{align*}
and the Gauss constraint  reduces to
$$\partial^j \partial_t A_j = -[A_j,\partial_t A^j]- [\partial_t \phi,\phi] \, . $$
We decompose $A$ into its divergence-free part $A^{df}$ and its curl-free part $A^{cf}$ :
$$
A = A^{df} + A^{cf} \, ,
$$
where
$$
 A_j^{df} = (PA)_j := R^k(R_j A_k - R_k A_j) \quad , \quad A_j^{cf} = - R_j R_k A^k \, .
 $$
Here $P$ denotes the Leray projection onto the divergence-free part, and $R_j := |\nabla|^{-1} \partial_j$ is the Riesz transform. 

Then we obtain the following system which is equivalent to (\ref{1'}): 
\begin{align}
\label{3'}
\partial_t A^{cf} &=   (-\Delta)^{-1} \nabla [A_i,\partial_t A^i] \\
\label{4'}
\Box A^{df} & = -P [div \, A^{cf},A] - 2 P[A^i,\partial_i A] + P[A^i,\nabla A_i]  - P[A^i,[A_i,A]] 
\end{align}

Similarly the following system is equivalent to (\ref{1}),(\ref{2}):
\begin{align}
\label{3}
\partial_t A^{cf} &= - (-\Delta)^{-1} \nabla [\partial_t \phi,\phi] + (-\Delta)^{-1} \nabla [A_i,\partial_t A^i] \\
\nonumber
\Box A^{df} & = -P [div \, A^{cf},A] - 2P[A^i,\partial_i A] + P[A^i,\nabla A_i] -P[\phi,\nabla \phi] - P[A^i,[A_i,A]] \\
\label{4}
& \hspace{1em}- P[\phi,[A,\phi]] \\
\label{5}
\Box \phi & = -[div \, A^{cf},\phi] - 2[A_i,\partial^i \phi] - [A^i,[A_i,\phi]] + |\phi|^{N-1} \phi \, .
\end{align}
We now show that all the critical terms in (\ref{4'}), (\ref{4}) and (\ref{5}), namely the quadratic terms which contain only $A^{df}$ or $\phi$ have null structure. Those quadratic terms which contain $A^{cf}$ are less critical, because $A^{cf}$ is shown to be more regular than $A^{df}$, and the cubic terms are also less critical, because they contain no derivatives. The only critical term in (\ref{5}) is $[A^{df}_i,\partial^i \phi]$. We easily calculate
\begin{align}
\nonumber
&[A^{df}_i,\partial^i A^{df}] = [R^k(R_i A_k - R_k A_i),\partial^i A^{df}] \\
\nonumber
&= \frac{1}{2} \big([R^k(R_i A_k - R_k A_i),\partial^i A^{df}] + [R^i(R_k A_i - R_i A_k),\partial^k A^{df}]\big) \\
\nonumber
&=\frac{1}{2} \big([R^k(R_i A_k - R_k A_i),\partial^i A^{df}] - [R^i(R_i A_k - R_k A_i),\partial^k A^{df}]\big) \\
\label{50}
&= \frac{1}{2} Q^{ik} [ |\nabla|^{-1}(R_i A_k - R_k A_i),A^{df}]
\end{align}
where
$$ Q_{ij}[u,v]  := [\partial_i u,\partial_jv] - [\partial_j u,\partial_i v] = Q_{ij}(u,v) + Q_{ji}(v,u) $$
with the standard null form
$$ Q_{ij}(u,v) := \partial_i u \partial_j v - \partial_j u \partial_i v \, . $$ 
Thus, ignoring $P$, which is a bounded operator we obtain
\begin{equation}
\label{N2}
P[A_i^{df},\partial^i A^{df}] \sim \sum Q_{ik}[|\nabla|^{-1} A^{df},A^{df}] \, ,
\end{equation}
and similarly
\begin{equation}
\label{N2'}
P[A_i^{df},\partial^i \phi] \sim \sum Q_{ik}[|\nabla|^{-1} A^{df},\phi] \, .
\end{equation}
Moreover
\begin{align*}
(\phi \nabla \phi')^{df}_j & = R^k(R_j(\phi \partial_k \phi') - R_k(\phi \partial_j \phi')) \\
& = |\nabla|^{-2} \partial^k(\partial_j (\phi \partial_k \phi') - \partial_k (\phi \partial_j \phi'))) \\
& = |\nabla|^{-2} \partial^k(\partial_j \phi \partial_k \phi' - \partial_k \phi \partial_j \phi') \\
& = |\nabla|^{-2} \partial^k Q_{jk}(\phi,\phi')
\end{align*}
so that
\begin{equation}
\label{N3}
P[\phi,\nabla \phi] \sim \sum |\nabla|^{-1} Q_{jk} [\phi,\phi] \, ,
\end{equation}
and
\begin{equation}
\label{N3'}
P[A^{df}_i,\nabla A^{df}_i] \sim \sum  |\nabla|^{-1}Q_{jk} [A^{df},A^{df}] \, ,
\end{equation}
All the other quadratic terms contain at least one factor $A^{cf}$.

Defining
\begin{align*}
\phi_{\pm} = \frac{1}{2}(\phi \mp i \langle \nabla \rangle^{-1} \partial_t \phi)&
 \Longleftrightarrow \phi=\phi_+ + \phi_- \, , \, \partial_t \phi = i \langle \nabla \rangle (\phi_+ - \phi_-) \\
 A^{df}_{\pm} = \frac{1}{2}(A^{df} \mp i \langle \nabla \rangle^{-1} \partial_t A^{df}) & \Longleftrightarrow A^{df} = A^{df}_+ + A_-^{df} \, , \, \partial_t A^{df} = i \langle \nabla \rangle(A^{df}_+ - A^{df}_-)
 \end{align*}
 we can rewrite (\ref{3'}),(\ref{4'}) as
 \begin{align}
 \label{6'}
 \partial_t A^{cf} &=  (-\Delta)^{-1} \nabla [A_i,\partial_t A^i] \\
 \label{7'}
(i \partial_t \pm \langle \nabla \rangle)A_{\pm} ^{df} & = \mp 2^{-1} \langle \nabla \rangle^{-1} ( R.H.S. \, of \, (\ref{4'}) - A^{df}) \, .
\end{align}
with initial data
\begin{align}
\label{1.15*'}
A^{df}_\pm(0) & = \frac{1}{2}(A^{df}(0) \mp i^{-1} \langle \nabla \rangle^{-1} \partial_t A^{df}(0) \, .
\end{align}
 Similarly we can rewrite (\ref{3}),(\ref{4}),(\ref{5}) as
 \begin{align}
 \label{6}
 \partial_t A^{cf} &=  (-\Delta)^{-1} \nabla [\partial_t \phi,\phi] + (-\Delta)^{-1} \nabla [A_i,\partial_t A^i] \\
 \label{7}
(i \partial_t \mp \langle \nabla \rangle)A_{\pm} ^{df} & = \mp 2^{-1} \langle \nabla \rangle^{-1} ( R.H.S. \, of \, (\ref{4}) - A^{df}) \\
\label{8}
(i \partial_t \mp \langle \nabla \rangle) \phi_{\pm} &= \mp 2^{-1} \langle \nabla \rangle^{-1}( R.H.S. \, of \, (\ref{5}) - \phi) \, .
\end{align}
The initial data are transformed as follows:
\begin{align}
\label{1.14*}
\phi_{\pm}(0) &= \frac{1}{2}(\phi(0) \mp i^{-1} \langle \nabla \rangle^{-1} \partial_t \phi(0)) \\
\label{1.15*}
A^{df}_\pm(0) & = \frac{1}{2}(A^{df}(0) \mp i^{-1} \langle \nabla \rangle^{-1} \partial_t A^{df}(0) \, .
\end{align}

\section{The preliminary local well-posedness results}

We now state and prove preliminary local well-posedness of (\ref{3'}),(\ref{4'}) as well as (\ref{3}),(\ref{4}),(\ref{5}), for which it is essential to have data for $A$ with vanishing curl-free part. 

\begin{prop}
\label{Prop'}
For space dimension $n \ge 3$ assume $s>\frac{n}{2}- \frac{5}{8}-\frac{5}{8(2n-1)}$ and $\alpha = \frac{3n+1}{8(2n-1)}$ .
Let
$a^{df} = (a_1^{df},...,a_n^{df}) \in H^s$ , $a'^{df}= ({a'}_1^{df},...,{a'}_n^{df}) \in H^{s-1}$ be given with
$$ \sum_j \|a_j^{df}\|_{H^s} + \sum_j \|{a'}_j^{df}\|_{H^{s-1}} \le \epsilon_0 \, ,$$
where $\epsilon_0 >0$ is sufficiently small. Then the system (\ref{3'}),(\ref{4'}) with initial conditions
$$ A^{df}(0)=a^{df} \, , \, (\partial_t A^{df})(0) = {a'}^{df} \, , \, A^{cf}(0) = 0 \, , $$
has a unique local solution 
$$ A= A^{df}_+ + A^{df}_- + A^{cf} \, , $$
where 
$$  A^{df}_{\pm} \in X^{s,\frac{3}{4}+}_{\pm}[0,1] \, , \, A^{cf} \in X^{s+\alpha,\frac{1}{2}+}_{\tau=0}[0,1] \, , \, \partial_t A^{cf} \in C^0([0,1],H^{s-1}) \, .$$
Uniqueness holds (of course) for not necessarily vanishing initial data $A^{cf}(0) = a^{cf}$. The solution satisfies
$$ A \in C^0([0,1],H^s) \cap C^1([0,1],H^{s-1}) \, . $$
\end{prop}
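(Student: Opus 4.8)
The plan is to set up a contraction mapping argument in the product space
$$ Z_T := \bigl( X^{s,\frac34+}_{+}[0,T] \times X^{s,\frac34+}_{-}[0,T] \bigr) \times \bigl( X^{s+\alpha,\frac12+}_{\tau=0}[0,T] \cap C^1([0,T],H^{s-1}) \bigr) $$
for a small $T \le 1$, applied to the integral (Duhamel) formulation of the system \eqref{6'},\eqref{7'}. The unknown $A^{df}_{\pm}$ solves a half-wave equation driven by $\langle\nabla\rangle^{-1}$ times the right-hand side of \eqref{4'}, while $A^{cf}$ is recovered by integrating \eqref{6'} in $t$ from the data $A^{cf}(0)=0$. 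Because the bilinear terms in \eqref{4'} are built out of $A = A^{df}_+ + A^{df}_- + A^{cf}$, one substitutes this decomposition and treats each resulting contribution separately according to the trichotomy announced in the text: (i) the genuinely critical quadratic terms that involve only $A^{df}$, which by \eqref{N2} and \eqref{N3'} carry the null form structure $Q_{ik}[|\nabla|^{-1}A^{df},A^{df}]$ and $|\nabla|^{-1}Q_{jk}[A^{df},A^{df}]$; (ii) quadratic terms containing at least one factor $A^{cf}$, which are easier because $A^{cf}$ lives in $H^{s+\alpha}$ with $\alpha>0$ extra derivatives; and (iii) the cubic terms $P[A^i,[A_i,A]]$, which carry no derivatives and are handled by Sobolev/algebra estimates in $X^{s,b}$.

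The core of the argument is a package of bilinear and trilinear estimates. First I would record the standard energy inequality for the half-wave propagator, giving
$$ \|A^{df}_{\pm}\|_{X^{s,\frac34+}_{\pm}[0,T]} \lesssim \|a^{df}\|_{H^s} + \|{a'}^{df}\|_{H^{s-1}} + T^{\theta}\,\|\langle\nabla\rangle^{-1}(\text{RHS of }\eqref{4'})\|_{X^{s,-\frac14+}_{\pm}[0,T]} $$
for some $\theta>0$, so that everything reduces to estimating the nonlinearity in $X^{s,-\frac14+}$. For the critical null-form terms, the key input is that the null forms $Q_{ik}$ and $Q_{jk}$ gain enough regularity: in the $(3+1)$-dimensional case these reduce, as the introduction says, essentially to Tao's estimates in \cite{T1}, and in general dimension one uses the corresponding $X^{s,b}$ null-form estimates — this is precisely where the threshold $s > \frac n2 - \frac58 - \frac5{8(2n-1)}$ and the gain $\alpha = \frac{3n+1}{8(2n-1)}$ come from, and it is also the estimate that (as the introduction warns) forces the smallness of the data. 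For the $A^{cf}$-terms one exploits that $\partial_t A^{cf}$ is obtained by applying $(-\Delta)^{-1}\nabla$ to a quadratic expression $[A_i,\partial_t A^i]$, i.e. with one derivative removed, so $A^{cf}$ gains $\alpha$ derivatives relative to $A^{df}$; integrating in $t$ then also produces a factor $T$ and places $A^{cf}$ in $C^1([0,T],H^{s-1}) \cap X^{s+\alpha,\frac12+}_{\tau=0}$. The cubic terms are estimated by iterating product estimates in $X^{s,b}$ (for $b = \frac34+$, which embeds into spaces where the relevant Hölder/Sobolev inequalities hold, using $s > \frac n2 - \frac58 - \dots$ to stay above or close enough to the algebra threshold), again with a positive power of $T$ to spare.

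Assembling these, the nonlinear map $\Phi: Z_T \to Z_T$ satisfies $\|\Phi(u)\|_{Z_T} \lesssim \epsilon_0 + (T^{\theta} + \epsilon_0)\|u\|_{Z_T}^2 + (\cdots)\|u\|_{Z_T}^3$ and a matching Lipschitz bound on differences, so for $\epsilon_0$ small and $T=1$ fixed (the smallness of the data, not of $T$, is what closes the critical estimate) $\Phi$ is a contraction on a small ball of $Z_1$; its fixed point is the desired solution on $[0,1]$. Uniqueness in $Z_T$ follows from the same Lipschitz estimate, and holds for arbitrary data $A^{cf}(0)=a^{cf}$ since changing the data for $A^{cf}$ only shifts the affine part of the map. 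Finally, the statement $A \in C^0([0,1],H^s) \cap C^1([0,1],H^{s-1})$ follows from the standard embedding $X^{s,b}_{\pm}[0,1] \hookrightarrow C^0([0,1],H^s)$ for $b > \frac12$ together with the analogous statement after applying $i\partial_t \pm \langle\nabla\rangle$ for the time derivative of $A^{df}_{\pm}$, plus the already-established regularity of $A^{cf}$. The main obstacle is step (i): verifying that the null-form bilinear estimates actually close at the stated regularity $s$ with the stated gain $\alpha$ in every dimension $n \ge 3$, which is the technical heart of the paper and where the precise numerology of the exponents is pinned down; the $A^{cf}$ and cubic terms, by contrast, should be comparatively routine.
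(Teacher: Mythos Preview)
Your overall architecture --- contraction in the stated product space, Duhamel for the half-wave components, time-integration for $A^{cf}$, and reduction to a list of bilinear/trilinear estimates organized by the trichotomy (null-form, $A^{cf}$-containing, cubic) --- is exactly the paper's approach. However, you have the location of the difficulty inverted. The null-form estimate for the $Q_{ij}$ terms (your step (i)) requires only $s > \tfrac{n}{2} - \tfrac{3}{4}$ and is essentially quoted from \cite{T}, Prop.~9.2; it is \emph{not} where the threshold $s > \tfrac{n}{2} - \tfrac{5}{8} - \tfrac{5}{8(2n-1)}$ or the value $\alpha = \tfrac{3n+1}{8(2n-1)}$ are determined. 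Those arise instead from the estimates you dismiss as ``comparatively routine'': specifically, $\alpha$ is fixed by \emph{balancing} the constraint coming from placing $|\nabla|^{-1}(A_i\,\partial_t A^i)$ into $X^{s+\alpha,-\frac12+}_{\tau=0}$ (which forces $\alpha$ small) against the constraint from the product $\nabla A^{cf}\cdot A^{cf}$ in $X^{s-1,-\frac14+}_{|\tau|=|\xi|}$ (which forces $\alpha$ large). Equating the two resulting lower bounds on $s$ yields the stated $\alpha$ and threshold; see the paper's derivation around \eqref{*****}, \eqref{****}, \eqref{*}.

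Similarly, the small-data requirement does \emph{not} come from the null-form estimate but from the mixed estimate \eqref{18} for the $A^{cf}$ equation, where the $b$-exponent on the left is exactly $-\tfrac12+\epsilon$ with no slack: one cannot upgrade it to $-\tfrac12+\epsilon+$, and hence cannot trade smallness of data for smallness of $T$. Your remark that ``integrating in $t$ then also produces a factor $T$'' is therefore misleading for this particular term. In short, the technical heart of the proof is the package \eqref{16}--\eqref{18}, \eqref{29}, \eqref{30} governing $A^{cf}$ and its interaction with $A^{df}$, not the null-form bilinear estimate, and the bulk of the paper's work goes there.
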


\begin{prop}
\label{Prop}
For space dimension $n \ge 3$ assume $s>\frac{n}{2}- \frac{5}{8}-\frac{5}{8(2n-1)}$ and $\alpha = \frac{3n+1}{8(2n-1)}$ . Assume $2 \le N < 1 + \frac{7}{4(\frac{n}{2}-s)}$ , if $s < \frac{n}{2}$ , and $2 \le N < \infty$,  if $ s \ge \frac{n}{2}$. Here $N$ is an odd integer, or $N \in\mathbb{N}$ with $N > s$.
Let
$a^{df} = (a_1^{df},...,a_n^{df}) \in H^s$ , $a'^{df}= ({a'}_1^{df},...,{a'}_n^{df}) \in H^{s-1}$ , $\phi_0 \in H^s$,  $\phi_1 \in H^{s-1}$ 
be given with
$$ \sum_j \|a_j^{df}\|_{H^s} + \sum_j \|{a'}_j^{df}\|_{H^{s-1}} + \|\phi_0\|_{H^s} + \|\phi_1\|_{H^{s-1}} \le \epsilon_0 \, ,$$
where $\epsilon_0 >0$ is sufficiently small. Then the system (\ref{3}),(\ref{4}),(\ref{5}) with initial conditions
$$ \phi(0) = \phi_0 \, , \, (\partial_t \phi)(0) = \phi_1 \, , \, A^{df}(0)=a^{df} \, , \, (\partial_t A^{df})(0) = {a'}^{df} \, , \, A^{cf}(0) = 0 \, , $$
has a unique local solution 
$$ \phi= \phi_+ + \phi_- \quad , \quad A= A^{df}_+ + A^{df}_- + A^{cf} \, , $$
where 
$$ \phi_{\pm} \in X^{s,\frac{3}{4}+}_{\pm}[0,1]  ,  A^{df}_{\pm} \in X^{s,\frac{3}{4}+}_{\pm}[0,1]  ,  A^{cf} \in X^{s+\alpha,\frac{1}{2}+}_{\tau=0}[0,1]  ,  \partial_t A^{cf} \in C^0([0,1],H^{s-1}) \, . $$
Uniqueness holds (of course) for not necessarily vanishing initial data $A^{cf}(0) = a^{cf}$. The solution satisfies
$$ A,\phi \in C^0([0,1],H^s) \cap C^1([0,1],H^{s-1}) \, . $$
\end{prop}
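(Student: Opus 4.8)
The strategy is the standard contraction-mapping scheme in the $X^{s,b}$-type spaces, so the only real work is to set up the right iteration space and to verify the multilinear estimates, importing the (3+1)-dimensional null-form bounds from Tao \cite{T1} wherever possible. First I would rewrite the system in Duhamel form: for the half-wave components $\phi_\pm$ and $A^{df}_\pm$ use the formulas \eqref{7}, \eqref{8} (resp.\ \eqref{7'}) together with the standard inversion of $(i\partial_t\mp\langle\nabla\rangle)$, which on the time interval $[0,1]$ maps $X^{s,b-1}_\pm[0,1]\to X^{s,b}_\pm[0,1]$ with $b=\frac34+$ and also controls the free part emanating from the data $\phi_\pm(0),A^{df}_\pm(0)\in H^s$ coming from \eqref{1.14*}, \eqref{1.15*}; for $A^{cf}$ use \eqref{6} (resp.\ \eqref{6'}), which is first order in $t$, so that $A^{cf}(t)=A^{cf}(0)+\int_0^t(\dots)$ with $A^{cf}(0)=0$, and one estimates it in $X^{s+\alpha,\frac12+}_{\tau=0}[0,1]$; note $(-\Delta)^{-1}\nabla$ gains one derivative, which is what produces the extra regularity $\alpha$ for the curl-free part. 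I would then define the solution space $\mathcal{X}$ as the product of the three (or four) spaces listed in the Proposition, with the obvious norm, and the map $\Phi$ on a small ball of $\mathcal{X}$ by solving each equation with the other components frozen on the right-hand side.

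The heart of the matter is the set of nonlinear estimates needed to show $\Phi$ maps a small ball into itself and is a contraction. These split into three groups. (i) \emph{The critical quadratic terms} $P[A^{df}_i,\partial^i A^{df}]$, $P[A^{df}_i,\nabla A^{df}_i]$, $P[\phi,\nabla\phi]$, $P[A^{df}_i,\partial^i\phi]$: by \eqref{N2}, \eqref{N2'}, \eqref{N3}, \eqref{N3'} these all reduce to the null forms $Q_{ij}$ acting on functions in $X^{s,\frac34+}_\pm$, with one factor carrying an extra $|\nabla|^{-1}$, and the required bound is of the shape $\|Q_{ij}(u,v)\|_{X^{s-1,-\frac14+}_\pm}\lesssim \|u\|_{X^{s+1,\frac34+}_\pm}\|v\|_{X^{s,\frac34+}_\pm}$ (and the version with $|\nabla|^{-1}Q_{jk}$ mapping into $X^{s,-\frac14+}$), which in $n=3$, $s>\frac34$, is exactly Tao's estimate; for general $n$ this is where the threshold $s>\frac n2-\frac58-\frac5{8(2n-1)}$ and the value $\alpha=\frac{3n+1}{8(2n-1)}$ come from, and one proves it by dyadic decomposition using the standard null-form gain $|Q_{ij}(u,v)|\lesssim$ (angle)$\times|\nabla u||\nabla v|$ together with the bilinear $L^2$ (improved Strichartz / $X^{s,b}$) estimates for the wave equation in dimension $n$. (ii) \emph{The subcritical terms containing $A^{cf}$}, e.g.\ $P[\operatorname{div}A^{cf},A]$, $[A_i,\partial^i\phi]$ with an $A^{cf}$ factor, $[\operatorname{div}A^{cf},\phi]$: here $A^{cf}\in X^{s+\alpha,\frac12+}_{\tau=0}$ is more regular and the extra derivative is harmless, so these follow from product estimates in $X^{s,b}$ spaces (Hölder/Sobolev plus the standard $X^{s,b}$ bilinear lemma) — no null structure needed. (iii) \emph{The cubic terms and the Higgs nonlinearity} $[A^i,[A_i,A]]$, $[\phi,[A,\phi]]$, $|\phi|^{N-1}\phi$: the cubic terms carry no derivatives, and $|\phi|^{N-1}\phi$ is estimated via the algebra/Moser-type property of $X^{s,b}$ (using $b=\frac34+$, $s$ as above, and the smoothness hypothesis $N>s$ or $N$ odd), which is precisely where the upper bound $N<1+\frac7{4(\frac n2-s)}$ enters to keep $|\phi|^{N-1}\phi\in X^{s-1,-\frac14+}$.

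Once these estimates are in hand, the contraction argument is routine: for $\epsilon_0$ small enough, the quadratic and higher bounds give $\|\Phi(u)\|_{\mathcal X}\le C\epsilon_0+C\|u\|_{\mathcal X}^2+C\|u\|_{\mathcal X}^{N}$, so $\Phi$ preserves the ball of radius $2C\epsilon_0$, and the difference estimates (same multilinear bounds applied to differences) give a contraction factor $\lesssim\epsilon_0<1$; the fixed point is the desired solution, and uniqueness in $\mathcal X$ (for arbitrary $a^{cf}$, since the argument never used $a^{cf}=0$ except to place $A^{cf}$ in the stated space) follows the same way. Finally, persistence of regularity $A,\phi\in C^0([0,1],H^s)\cap C^1([0,1],H^{s-1})$ follows from the standard embedding $X^{s,b}_\pm[0,1]\hookrightarrow C^0([0,1],H^s)$ for $b>\frac12$ applied to the half-wave pieces, from the definition of $\phi_\pm,A^{df}_\pm$ for the $H^{s-1}$ control of the time derivatives, and for $A^{cf}$ directly from \eqref{6}/\eqref{6'} together with the nonlinear estimates, which place $\partial_t A^{cf}\in C^0([0,1],H^{s-1})$. \textbf{The main obstacle} is group (i): proving the null-form estimate $\|Q_{ij}(u,v)\|_{X^{s-1,-\frac14+}}\lesssim\|u\|_{X^{s+1,\frac34+}}\|v\|_{X^{s,\frac34+}}$ (and its $|\nabla|^{-1}Q$ variant) in general dimension $n\ge3$ with the sharp regularity threshold — getting the exponents $s>\frac n2-\frac58-\frac5{8(2n-1)}$ and $\alpha=\frac{3n+1}{8(2n-1)}$ to close requires a careful case analysis over the relative sizes of the frequencies and of the modulations $\langle\tau\mp|\xi|\rangle$, balancing the angular null-form gain against the available bilinear wave estimates in dimension $n$, and it is one of these estimates (the one forcing $A^{cf}$ into a space of regularity only $s+\alpha$ rather than $s+1$) that is responsible for the smallness assumption on the data.
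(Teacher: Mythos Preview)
Your overall architecture---contraction in the product space, reduction to multilinear estimates, and the three-fold grouping of the nonlinear terms---matches the paper's proof. However, you have misidentified where the sharp threshold and the value of $\alpha$ actually come from, and this misattribution would send you looking in the wrong place for the hard work.

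In the paper, the null-form estimates (your group (i), the paper's estimate (\ref{28})) are \emph{not} the bottleneck: they are quoted directly from Tao \cite{T}, Prop.~9.2, and hold already under the weaker hypothesis $s>\frac{n}{2}-\frac34$, which for $n\ge4$ is strictly below the threshold in the Proposition. The exponents $s>\frac{n}{2}-\frac58-\frac{5}{8(2n-1)}$ and $\alpha=\frac{3n+1}{8(2n-1)}$ arise instead from \emph{balancing two of the ``subcritical'' estimates you placed in group (ii)}: on one side the control of the curl-free part, i.e.\ placing $|\nabla|^{-1}(\phi_1\partial_t\phi_2)$ into $X^{s+\alpha,-\frac12+}_{\tau=0}$ (estimates (\ref{16}) and (\ref{18})), which forces $s\ge \frac{n}{2}-\frac{n}{n-1}+\frac{2n}{n-1}\alpha$; on the other side the product $\nabla A^{cf}\,A^{cf}$ in $X^{s-1,-\frac14+}_{|\tau|=|\xi|}$ (estimate (\ref{30})), which forces $s+2\alpha+\frac14>\frac{n}{2}$. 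Equating these two constraints is exactly what fixes $\alpha$ and the lower bound on $s$. Moreover, the estimate responsible for the small-data restriction is (\ref{18}) (a mixed $X_{\tau=0}$/$X_{|\tau|=|\xi|}$ product bound for the $A^{cf}$ equation), not a null-form bound. So your group (ii) is not routine H\"older/Sobolev: (\ref{29}) in the paper is handled via Tao's averaging principle and Schur's test, and (\ref{16}),(\ref{18}) require the endpoint $L^p_xL^2_t$ Strichartz-type estimate (the paper's (\ref{Tao})) together with several interpolations. If you attempt the proof as you've outlined it, you will find group (i) closes with room to spare and then get stuck on group (ii), where the real case analysis lives.
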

\vspace{1cm}
Fundamental for their proof are the following estimates.

\begin{prop}
\label{Prop.2}
Let $n \ge 2$.
\begin{enumerate}
\item For $2 < q \le \infty $ , $ 2 \le r < \infty$ , $ \frac{2}{q} = (n-1)(\frac{1}{2}-\frac{1}{r})$ , $ \mu = n(\frac{1}{2}-\frac{1}{r})-\frac{1}{q}$ the following estimate holds
\begin{equation}
\label{15}
\|u\|_{L^q_t L^r_x} \lesssim \|u\|_{X^{\mu,\frac{1}{2}+}_{|\tau|=|\xi|}} \, . 
\end{equation}
\item For $k \ge 0$ , $ p < \infty$ and $ \frac{n-1}{2(n+1)} \ge \frac{1}{p} \ge \frac{n-1}{2(n+1)} - \frac{k}{n}$ the following estimate holds:
\begin{equation}
\label{Tao}
\|u\|_{ L^p_x L^2_t} \lesssim \|u\|_{X^{k+\frac{n-1}{2(n+1)},\frac{1}{2}+}_{|\tau|=|\xi|}} \, . 
\end{equation}
\end{enumerate}
\end{prop}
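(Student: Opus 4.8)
**The plan is to establish Proposition \ref{Prop.2} as two instances of well-known Strichartz- and local-smoothing-type estimates for the wave equation, transferred to the $X^{s,b}$ framework via the standard transference principle.**

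For part (1), I would start from the classical Strichartz estimates for the homogeneous wave equation: if $\Box v = 0$ with $v(0) = f$, $\partial_t v(0) = g$, then $\|v\|_{L^q_t L^r_x} \lesssim \|f\|_{\dot H^\mu} + \|g\|_{\dot H^{\mu-1}}$ precisely under the admissibility relations $\frac{2}{q} = (n-1)(\frac12 - \frac1r)$ (the wave-admissibility/scaling-compatible condition) together with $\mu = n(\frac12 - \frac1r) - \frac1q$, which is exactly the scaling exponent. The endpoint $q=2$ is excluded here (only $q>2$ is allowed), so one stays away from the delicate Keel--Tao endpoint and may cite the older Strichartz/Ginibre--Veleau/Lindblad--Sogge range. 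One then invokes the transference principle: any estimate $\|e^{\pm it|\nabla|} f\|_{L^q_t L^r_x} \lesssim \|f\|_{\dot H^\mu}$ for the free half-wave propagator upgrades, for $b > \frac12$, to $\|u\|_{L^q_t L^r_x} \lesssim \|u\|_{X^{\mu,b}_{\pm}}$, and since for $b>0$ one has $\|u\|_{X^{\mu,b}_{\pm}} \lesssim \|u\|_{X^{\mu,b}_{|\tau|=|\xi|}}$ (a remark already recorded in the excerpt), (\ref{15}) follows. A small technical point is to handle homogeneous versus inhomogeneous Sobolev norms, but since we only claim the estimate with $\langle\xi\rangle^\mu$ this is harmless.

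For part (2), the relevant input is the local smoothing / square-function estimate for the wave equation in the form due to Tao (and Klainerman--Tataru), namely $\||\nabla|^{-\frac{n-1}{2(n+1)}} e^{\pm it|\nabla|} f\|_{L^p_x L^2_t} \lesssim \|f\|_{L^2_x}$ for $p$ in the stated range $\frac{1}{p} \le \frac{n-1}{2(n+1)}$; the index $\frac{n-1}{2(n+1)}$ is exactly the exponent appearing in the $L^2_t L^p_x$ (or by duality $L^p_x L^2_t$) smoothing bound. Allowing a loss of $k$ derivatives widens the range of $p$ by Sobolev embedding in $x$ — moving $\frac1p$ down by $\frac{k}{n}$ costs $k$ derivatives — which accounts for the two-sided condition $\frac{n-1}{2(n+1)} \ge \frac1p \ge \frac{n-1}{2(n+1)} - \frac{k}{n}$. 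Again apply the transference principle with $b>\frac12$ to pass from the free propagator to $X^{k+\frac{n-1}{2(n+1)},\frac12+}_{\pm}$, and then dominate by the $X^{\cdot}_{|\tau|=|\xi|}$-norm. (The implicit constant blows up as $p\to\infty$, which is why $p<\infty$ is imposed.)

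I do not expect a serious obstacle here: the statement is essentially a repackaging of known linear estimates, and the only care needed is bookkeeping — matching the admissibility exponents in the literature to the ones written here, keeping track of whether the derivative count is homogeneous or inhomogeneous, and citing the correct source for the $L^p_x L^2_t$ smoothing estimate (this is Tao's estimate, hence the label (\ref{Tao}), and is the form of the smoothing bound used later to control the null forms $Q_{ij}$). The mild endpoint restrictions ($q>2$ in (1), $p<\infty$ in (2)) are precisely what allow one to avoid the hardest endpoint cases.
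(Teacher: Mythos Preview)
Your proposal is correct and matches the paper's approach closely. For part (1) the paper likewise cites the classical Strichartz estimate (Ginibre--Velo \cite{GV}) combined with the transfer principle; for part (2) the only difference is that the paper takes as its input the $L^2_\tau L_x^{2(n+1)/(n-1)}$ estimate from Klainerman--Machedon \cite{KMBT} (rather than an $L^p_x L^2_t$ bound stated directly) and then passes to the $L^p_x L^2_t$ form via Plancherel in time, Minkowski's inequality to swap the order of norms, and Sobolev embedding in $x$ --- exactly your Sobolev step, with two additional elementary moves made explicit.
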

\begin{proof}
(\ref{15}) is the Strichartz type estimate, which can be found for e.g. in \cite{GV}, Prop. 2.1, combined with the transfer principle.

Concerning (\ref{Tao}) we use \cite{KMBT}, Thm. B.2:
$$ \|\mathcal{F}_t u \|_{L^2_{\tau} L_x^{\frac{2(n+1)}{n-1}}} \lesssim \|u_0\|_{\dot{H}^{\frac{n-1}{2(n+1)}}} \, , $$
if $u=e^{it |\nabla|} u_0$ and $\mathcal{F}_t$ denotes the Fourier transform with respect to time. This immediately implies by Plancherel, Minkowski's inequality and Sobolev's embedding theorem
$$\|u\|_{L^p_x L^2_t} = \|\mathcal{F}_t u \|_{L^p_x L^2_\tau} \le \|\mathcal{F}_t u \|_{L^2_{\tau} L^p_x} \lesssim \|\mathcal{F}_t u \|_{L^2_{\tau} H^{k,\frac {2(n+1)}{n-1}}_x} \lesssim \|u_0\|_{H^{k+\frac{n-1}{2(n+1)}}} \, . $$
The transfer principle implies (\ref{Tao}).
\end{proof}

\begin{proof}[Proof of Prop. \ref{Prop} and Prop. \ref{Prop'}] 
We use the system (\ref{6'}),(\ref{7'}) (instead of (\ref{3'}),(\ref{4'})) and (\ref{6}),(\ref{7}),(\ref{8}) (instead of (\ref{3}),(\ref{4}),(\ref{5})) with initial conditions (\ref{1.15*'}) and (\ref{1.14*}),(\ref{1.15*}). We want to use a contraction argument for  $A_{\pm}^{df} \in X_{\pm}^{s,\frac{3}{4}+\epsilon}[0,1] \, , \, A^{cf} \in X^{s+\alpha,\frac{1}{2}+\epsilon}_{\tau=0}$ $[0,1]$ , $\partial_t A^{cf} \in C^0([0,1],H^{s-1}$ ), and in the Yang-Mills-Higgs case in addition for $\phi \in X_{\pm}^{s,\frac{3}{4}+ \epsilon}[0,1]$ . 
Provided that our small data assumption holds this can be reduced  by well-known arguments to suitable multilinear estimates of the right hand sides of these equations. 
For (\ref{7'}) e.g. we make use of the following well-known estimate:
$$
\|A^{df}_{\pm}\|_{X^{l,b}_{\pm}[0,1]} \lesssim \|A^{df}_{\pm}(0)\|_{H^l} +  \| R.H.S. \, of \, (\ref{7'}) \|_{X^{l,b-1}_{\pm}[0,1]} \, , $$
which holds for $l\in{\mathbb R}$  , $\frac{1}{2} < b \le 1$ .

Thus the local existence and uniqueness can be reduced to the following estimates.

In order to control $A^{cf}$ we need
\begin{align}
\label{16}
\| |\nabla|^{-1} (\phi_1 \partial_t \phi_2)\|_{X^{s+\alpha,-\frac{1}{2}+\epsilon+}_{\tau=0}} &\lesssim \|\phi_1\|_{X^{s,\frac{3}{4}+\epsilon}_{|\tau|=|\xi|}} \|\phi_2\|_{X^{s,\frac{3}{4}+\epsilon}_{|\tau|=|\xi|}} \\
\label{17}
\| |\nabla|^{-1} (\phi_1 \partial_t \phi_2)\|_{X^{s+\alpha,-\frac{1}{2}+2\epsilon-}_{\tau=0}} &\lesssim \|\phi_1\|_{X^{s+\alpha,\frac{1}{2}+\epsilon}_{\tau=0}} \|\phi_2\|_{X^{s+\alpha,\frac{1}{2}+\epsilon}_{\tau=0}} \\
\label{18}
\| |\nabla|^{-1} (\phi_1 \partial_t \phi_2)\|_{X^{s+\alpha,-\frac{1}{2}+\epsilon}_{\tau=0}} &+ \| |\nabla|^{-1} (\phi_2 \partial_t \phi_1)\|_{X^{s+\alpha,-\frac{1}{2}+\epsilon}_{\tau=0}} \\ \nonumber
&\lesssim \|\phi_1\|_{X^{s+\alpha,\frac{1}{2}+\epsilon}_{\tau=0}} \|\phi_2\|_{X^{s,\frac{3}{4}+\epsilon}_{|\tau|=|\xi|}} \, .
\end{align}
In order to control $\partial_t A^{cf}$ we need
\begin{align}
\label{19}
\| |\nabla|^{-1} (A_1 \partial_t A_2)\|_{C^0(H^{s-1})} \lesssim &
(\|A_1^{cf}\|_{X^{s+\alpha,\frac{1}{2}+}_{\tau=0}} + \sum_{\pm} 
\|A^{df}_{1\pm}\|_{X^{s,\frac{1}{2}+}_{\pm}})\\
\nonumber
&(\|\partial_t 
A^{cf}_2\|_{C^0(H^{s-1})} +  \sum_{\pm} \|A^{df}_{2\pm}\|_{X^{s,\frac{1}{2}+}_{\pm}}) \, .
\end{align}
The estimate for $A^{df}$ and $\phi$ by use of (\ref{N2}),(\ref{N2'}),(\ref{N3}),(\ref{N3'}) reduces to
\begin{align}
\nonumber
&\|Q_{ij}(|\nabla|^{-1}\phi_1,\phi_2)\|_{X^{s-1,-\frac{1}{4}+2\epsilon}_{|\tau|=|\xi|}} + \|\nabla^{-1}Q_{ij}(\phi_1,\phi_2)\|_{X^{s-1,-\frac{1}{4}+2\epsilon}_{|\tau|=|\xi|}} \\
\label{28}
&\hspace{1em}\lesssim \|\phi_1\|_{X^{s,\frac{3}{4}+\epsilon}_{|\tau|=|\xi|}} \|\phi_2\|_{X^{s,\frac{3}{4}+\epsilon}_{|\tau|=|\xi|}} \, .
\end{align}
For the proof of (\ref{28}) we refer to \cite{T}, Prop. 9.2 (slightly modified), which is given under the assumption $s > \frac{n}{2}-\frac{3}{4}$. This assumption is weaker than our assumption, if $n\ge 4$, and they coincide for $n=3$.

Moreover for the terms $P[div \,A^{cf},A]$ , $P[A^i,\partial_i A]$ and $P[A^i,\partial_j A_i]$ we need
\begin{equation}
\label{29}
\| \nabla A^{cf} A^{df} \|_{X^{s-1,-\frac{1}{4}+2\epsilon}_{|\tau|=|\xi|}} +
\| A^{cf} \nabla A^{df} \|_{X^{s-1,-\frac{1}{4}+2\epsilon}_{|\tau|=|\xi|}} \lesssim \|A^{cf}\|_{X^{s+\alpha,\frac{1}{2}+\epsilon}_{\tau =0}}  \|A^{df}\|_{X^{s,\frac{3}{4}+\epsilon}_{|\tau|=|\xi|}} 
\end{equation}
 and
\begin{equation}
\label{30}
\| \nabla A^{cf} A^{cf} \|_{X^{s-1,-\frac{1}{4}+2\epsilon}_{|\tau|=|\xi|}}  \lesssim \|A^{cf}\|_{X^{s+\alpha,\frac{1}{2}+\epsilon}_{\tau =0}}^2  \, .
\end{equation}
All the cubic terms are estimated by
\begin{equation}
\label{31}
\| A_1 A_2 A_3 \|_{X^{s-1,-\frac{1}{4}+2\epsilon}_{|\tau|=|\xi|}} \lesssim \prod_{i=1}^3 \min(\|A_i\|_{X^{s,\frac{3}{4}+\epsilon}_{|\tau|=|\xi|}},\|A_i\|_{X^{s+\alpha,\frac{1}{2}+\epsilon}_{\tau =0}} ) \, .
\end{equation}
Remark that in (\ref{19}), (\ref{29}) and (\ref{31}) $A^{df}$ may be replaced by $\phi$ . \\
For the Yang-Mills-Higgs system we additionally need
\begin{equation}
\label{32}
\| |\phi|^{N-1} \phi \|_{X^{s-1,-\frac{1}{4}+2\epsilon}_{|\tau|=|\xi|}} \lesssim \|\phi\|^N_{X^{s,\frac{3}{4}+\epsilon}_{|\tau|=|\xi|}} \, .
\end{equation}
All these estimates up to (\ref{19}) and (\ref{32}) have been essentially given by Tao \cite{T1} for the Yang-Mills case in space dimension $n=3$.  We remark that it is especially (\ref{18}) which prevents a large data result, because it seems to be difficult to replace $X^{s+\alpha,-\frac{1}{2}+\epsilon}_{\tau=0}$ by $X^{s+\alpha,-\frac{1}{2}+\epsilon+}_{\tau=0}$ on the left hand side.\\
{\bf Proof of (\ref{17}).}
As usual the regularity of $|\nabla|^{-1}$ is harmless in dimension $n\ge 3$ (\cite{T}, Cor. 8.2) and it can be replaced by $\langle \nabla \rangle^{-1}$. Taking care of the time derivative we reduce to
\begin{align*}
\big|\int \int u_1 u_2 u_3 dx dt\big| \lesssim \|u_1\|_{X^{s+\alpha,\frac{1}{2}+\epsilon}_{\tau =0}}
\|u_2\|_{X^{s+\alpha,-\frac{1}{2}+\epsilon}_{\tau =0}}
\|u_3\|_{X^{1-(\alpha +s),\frac{1}{2}-2\epsilon+}_{\tau =0}} \, ,
\end{align*}
which follows from Sobolev's multiplication rule, because under our assumption on $s$ and the choice of $\alpha$ we obtain
$ 2(s+\alpha)+1-(\alpha +s) > \frac{n}{2}$ , as one easily calculates. \\
{\bf Proof of (\ref{18}).}
a. If $\widehat{\phi}$ is supported in  $ ||\tau|-|\xi|| \gtrsim |\xi| $ , we obtain
$$ \|\phi\|_{X^{s+\alpha,\frac{1}{2}+\epsilon}_{\tau=0}} \lesssim \|\phi\|_{X^{s,\frac{3}{4}+\epsilon}_{|\tau|=|\xi|}} \,, $$
when we remark that $\alpha \le \frac{1}{4}$ for $n\ge 3$ .
Thus (\ref{18}) follows from (\ref{17}).\\
b. It remains to show
$$ \big|\int\int (uv_t w + uvw_t) dxdt \big| \lesssim 
\|u\|_{X^{1-\alpha-s,\frac{1}{2}-\epsilon}_{\tau =0}}
\|w\|_{X^{s,\frac{3}{4}+\epsilon}_{|\tau| =|\xi|}}
\|v\|_{X^{s+\alpha-\epsilon,\frac{1}{2}+\epsilon}_{\tau =0}} \, $$
whenever $\widehat w$ is supported in $||\tau|-|\xi|| \ll |\xi|$.
This is equivalent to
$$ \int_* m(\xi_1,\xi_2,\xi_3,\tau_1,\tau_2,\tau_3) \prod_{i=1}^3 \widehat{u}_i(\xi_i,\tau_i) d\xi d\tau \lesssim \prod_{i=1}^3 \|u_i\|_{L^2_{xt}} \, $$
where $d\xi = d\xi_1 d\xi_2 d\xi_3$ , $d\tau = d\tau_1 d\tau_2 d\tau_n$ and * denotes integration over $\sum_{i=1}^3 \xi_i = \sum_{i=1}^3 \tau_i = 0$. The Fourier transforms are nonnegative without loss of generality. Here
$$ m= \frac{(|\tau_2|+|\tau_3|) \chi_{||\tau_3|-|\xi_3|| \ll |\xi_3|}}{\langle \xi_1 \rangle^{1-\alpha-s} \langle \tau_1 \rangle^{\frac{1}{2}-\epsilon} \langle \xi_2 \rangle^{s+\alpha-\epsilon} \langle \tau_2 \rangle^{\frac{1}{2}+\epsilon} \langle \xi_3 \rangle^s \langle |\tau_3|-|\xi_3|\rangle^{\frac{3}{4}+\epsilon}} \, . $$
Since $\langle \tau_3 \rangle \sim \langle \xi_3 \rangle$ and $\tau_1+\tau_2+\tau_3=0$ we have 
\begin{equation}
\label{N4'}
|\tau_2| + |\tau_3| \lesssim \langle \tau_1 \rangle^{\frac{1}{2}-\epsilon} \langle \tau_2 \rangle^{\frac{1}{2}+\epsilon} +\langle \tau_1 \rangle^{\frac{1}{2}-\epsilon} \langle \xi_3 \rangle^{\frac{1}{2}+\epsilon} +\langle \tau_2 \rangle^{\frac{1}{2}+\epsilon} \langle \xi_3 \rangle^{\frac{1}{2}-\epsilon} , 
\end{equation}
so that concerning the first term on the right hand side of (\ref{N4'}) we have to show
$$\big|\int\int uvw dx dt\big| \lesssim \|u\|_{X^{1-\alpha-s,0}_{\tau=0}} \|v\|_{X^{s+\alpha-\epsilon,0}_{\tau=0}} \|w\|_{X^{s,\frac{3}{4}+\epsilon}_{|\tau|=|\xi|}} \ , $$
which easily follows from Sobolev's multiplication rule, because $s> \frac{n}{2}-1$.\\
Concerning the second term on the right hand side of (\ref{N4'}) we use $\langle \xi_1 \rangle^{s-1+\alpha} \lesssim \langle \xi_2 \rangle^{s-1+\alpha} + \langle \xi_3 \rangle^{s-1+\alpha}$, so that we reduce to
\begin{equation}
\label{51}
\big|\int\int uvw dx dt\big|  
 \lesssim\|u\|_{X^{0,0}_{\tau=0}} \|v\|_{X^{1-\epsilon,\frac{1}{2}+\epsilon}_{\tau=0}} \|w\|_{X^{s-\frac{1}{2}-\epsilon,\frac{3}{4}+\epsilon}_{|\tau|=|\xi|}} 
\end{equation}
and 
\begin{equation}
\label{52}
\big|\int\int uvw dx dt\big|  
 \lesssim\|u\|_{X^{0,0}_{\tau=0}}
\|v\|_{X^{s+\alpha-\epsilon,\frac{1}{2}+\epsilon}_{\tau=0}} \|w\|_{X^{\frac{1}{2}-\alpha-\epsilon,\frac{3}{4}+\epsilon}_{|\tau|=|\xi|}} \,.
\end{equation}
To obtain (\ref{51}) in the case $n \ge 4$ we estimate as follows:
$$ \big| \int\int uvw dx dt \big| \le \|u\|_{L^2_xL^2_t} \|v\|_{L^{\frac{2n}{n-2+2\epsilon}}_x L^{\infty}_t} \|w\|_{L^{\frac{n}{1-\epsilon}}_xL^2_t} \, . $$
We use (\ref{Tao}) with $p=\frac{n}{1-\epsilon}$ and $k= n(\frac{n-1}{2(n+1)}-\frac{1}{p})$, so that one easily checks that
$$ k + \frac{n-1}{2(n+1)} = n(\frac{n-1}{2(n+1)} - \frac{1-\epsilon}{n}) + \frac{n-1}{2(n+1)} < \frac{n}{2} - \frac{5}{4} < s - \frac{1}{2} - \epsilon \, . $$
Thus
$$ \|w\|_{L^{\frac{n}{1-\epsilon}}_x L^2_t} \lesssim \|w\|_{X^{k+\frac{n-1}{2(n+1)},\frac{1}{2}+}_{|\tau|=|\xi|}} \le \|w\|_{X^{s-\frac{1}{2}-\epsilon,\frac{1}{2}+}_{|\tau|=|\xi|}}$$
and by Sobolev
$$ \|v\|_{L^{\frac{2n}{n-2+2\epsilon}}_x L^{\infty}_t} \lesssim \|v\|_{X^{1-\epsilon,\frac{1}{2}+}_{\tau =0}} \, . $$
In the case $n=3$ we estimate by Sobolev and (\ref{Tao})
$$ \big| \int\int uvw dx dt \big| \le \|u\|_{L^2_xL^2_t} \|v\|_{L^4_x L^{\infty}_t} \|w\|_{L^4_xL^2_t}
\lesssim \|u\|_{X^{0,0}_{\tau =0}} \|v\|_{X^{1-\epsilon,\frac{1}{2}+}_{|\tau|=|\xi|}} \|w\|_{X^{\frac{1}{4},\frac{1}{2}+}_{|\tau|=|\xi|}} $$
In order to obtain (\ref{52}) we estimate as follows:
$$ \big| \int\int uvw dx dt \big| \le \|u\|_{L^2_x L^2_t} \|v\|_{L^{\tilde{q}}_x L^{\infty}_t} \|w\|_{L^p_x L^2_t}$$
with $\frac{1}{\tilde{q}} = \frac{2(\frac{1}{2}-\alpha-\epsilon)}{n-1}$ and $ \frac{1}{p}= \frac{n-1-4(\frac{1}{2}-\alpha-\epsilon)}{2(n-1)}$. Then we use the embedding $H^{s+\alpha-\epsilon}_x \subset L^{\tilde{q}}_x$. This is true, because one easily checks $\frac{2(\frac{1}{2}-\alpha - \epsilon)}{n-1} \ge \frac{1}{2} - \frac{s+\alpha-\epsilon}{n}$, using $\alpha \le \frac{1}{4}$ and $s> \frac{n}{2}-\frac{3}{4}$. We next show that
$$\|w\|_{L^p_x L^2_t} \lesssim \|w\|_{X^{\frac{1}{2}-\alpha-\epsilon,\frac{1}{2}+}_{|\tau|=|\xi|}} \, . $$
This follows by interpolation between (\ref{Tao}) (with $k=0$) and the trivial identity $\|w\|_{L^2_x L^2_t} = \|u\|_{X^{0,0}_{|\tau|=|\xi|}} $ with interpolation parameter $\theta$ given by $\theta \frac{n-1}{2(n+1)} = \frac{1}{2}-\alpha-\epsilon$. One checks that $\theta < 1$ and $\frac{1}{p} = \frac{1-\theta}{2} + \theta \frac{n-1}{2(n+1)}$, so that (\ref{52}) follows.

Concerning the last term on the right hand side of (\ref{N4'}) we use
$\langle \xi_1 \rangle^{s-1+\alpha} \lesssim \langle \xi_2 \rangle^{s-1+\alpha} + \langle \xi_3 \rangle^{s-1+\alpha}$ so that we reduce to
\begin{equation}
\label{53}
\big|\int\int uvw dx dt\big| 
\lesssim \|u\|_{X^{0,\frac{1}{2}-\epsilon}_{\tau=0}} 
\|v\|_{X^{1-\epsilon,0}_{\tau=0}}
\|w\|_{X^{s-\frac{1}{2}+\epsilon,\frac{3}{4}+\epsilon}_{|\tau|=|\xi|}} 
\end{equation}
and
\begin{equation}
\label{54}
\big|\int\int uvw dx dt\big| 
\lesssim \|u\|_{X^{0,\frac{1}{2}-\epsilon}_{\tau=0}} 
\|v\|_{X^{s+\alpha-\epsilon,0}_{\tau=0}} \|w\|_{X^{\frac{1}{2}-\alpha+\epsilon,\frac{3}{4}+\epsilon}_{|\tau|=|\xi|}}  \,.
\end{equation}
In order to obtain (\ref{53}) in the case $n \ge 4$ we estimate by H\"older's inequality
$$ \big| \int\int uvw dx dt \big| \le \|u\|_{L^2_x L^{\frac{1}{\epsilon}}_t} \|v\|_{L^{\frac{2n}{n-2+2\epsilon}}_x L^2_t} \|w\|_{L^{\frac{n}{1-\epsilon}} L^{\frac{2}{1-2\epsilon}}_t} \, . $$
By Sobolev we have
$$ \|v\|_{L^{\frac{2n}{n-2+2\epsilon}}_x L^2_t} \lesssim \|v\|_{X^{1-\epsilon,0}_{\tau=0}} \,,$$
and by (\ref{Tao}) we obtain for $\frac{1}{p} = \frac{1}{n}-O(\epsilon)$ :
$$\|w\|_{L^p_x L^2_t} \lesssim \|w\|_{X^{k+\frac{n-1}{2(n+1)},\frac{1}{2}+}_{|\tau|=|\xi|}} \, , $$
where 
$$\frac{k}{n}=\frac{n-1}{2(n+1)}-\frac{1}{n}+ O(\epsilon) \, \Leftrightarrow \, k+\frac{n-1}{2(n+1)} = \frac{n}{2}- \frac{3}{2} + O(\epsilon) < s-\frac{3}{4} \, . $$
Interpolation with the standard Strichartz inequality (\ref{15}) for $q=r= \frac{2(n+1)}{n-1}$:
$$ \|w\|_{L_x^{\frac{2(n+1)}{n-1}} L_t^{\frac{2(n+1)}{n-1}}} = \|w\|_{L_t^{\frac{2(n+1)}{n-1}} L_x^{\frac{2(n+1)}{n-1}}} \lesssim \|w\|_{X^{\frac{1}{2},\frac{1}{2}+}_{|\tau|=|\xi|}} $$
and interpolation parameter $\theta = (n+1)\epsilon$ gives
$$\|w\|_{L^{\frac{n}{1-\epsilon}}_x L^{\frac{2}{1-2\epsilon}}_t} \lesssim \|w\|_{X^{s-\frac{3}{4},\frac{1}{2}+}_{|\tau|=|\xi|}} \, , $$
which is more than we need.\\
In order to obtain (\ref{53}) in the case $n = 3$ we estimate as follows:
\begin{align*}
 \big| \int\int uvw dx dt \big| &\lesssim \|u\|_{L^2_x L^{\frac{1}{\epsilon}}_t} \|v\|_{L^{4}_x L^2_t} \|w\|_{L^4 L^{\frac{2}{1-2\epsilon}}_t} \\
&\lesssim \|u\|_{X^{0,\frac{1}{2}-\epsilon}_{\tau=0}} 
\|v\|_{X^{1-\epsilon,0}_{\tau=0}}
\|w\|_{X^{\frac{1}{4}+\epsilon,\frac{1}{2}+\epsilon}_{|\tau|=|\xi|}} \, ,
\end{align*}
which is sufficient under our assumption $ s > \frac{3}{4} $ . \\
In order to obtain (\ref{54}) we estimate
$$ \big| \int\int uvw dx dt \big| \le \|u\|_{L^2_x L^{\frac{1}{\epsilon}}_t} \|v\|_{L^{p}_x L^2_t} \|w\|_{L^{q}_x L^{\frac{2}{1-2\epsilon}}_t} \, , $$
where $\frac{1}{p}= \frac{1}{2}-\frac{s+\alpha-\epsilon}{n}$ and $\frac{1}{q} = \frac{s+\alpha-\epsilon}{n}$, so that by Sobolev
$$\|v\|_{L^p_x L^2_t} \lesssim \|v\|_{X^{s+\alpha-\epsilon,0}_{\tau =0}} \, . $$
One easily checks that $\frac{1}{\tilde{q}} := \frac{1}{q}- O(\epsilon) > \frac{n-1}{2(n+1)}$ under our assumptions on $s$ and $\alpha$. By (\ref{Tao}) we obtain $$\|w\|_{L^{\frac{2(n+1)}{n-1}}_x L^2_t} \lesssim \|w\|_{X^{\frac{n-1}{2(n+1)},\frac{1}{2}+}_{|\tau|=|\xi|}} \, , $$
which we interpolate with the trivial identity $\|w\|_{L^2_xL^2_t} = \|w\|_{X^{0,0}_{|\tau|=|\xi|}} $, 
where the interpolation parameter $\theta$ is chosen such that
$$ \frac{1}{\tilde{q}} = \theta \frac{n-1}{2(n+1)} + (1-\theta) \frac{1}{2} \, \Leftrightarrow \, \theta = (n+1)(\frac{1}{2}-\frac{s+\alpha}{n}) + O(\epsilon) \, , $$
we obtain
$$\|w\|_{L^{\tilde{q}}_x L^2_t} \lesssim \|w\|_{X^{k,\frac{1}{2}+}_{|\tau|=|\xi|}} $$
with $k=\theta \frac{n-1}{2(n+1)} = \frac{n-1}{2}(\frac{1}{2}-\frac{s+\alpha}{n})+O(\epsilon) $. An easy calculation now shows that $k < \frac{1}{2}-\alpha$, so that another interpolation with Strichartz' inequality
$$ \|w\|_{L_x^{\frac{2(n+1)}{n-1}} L_t^{\frac{2(n+1)}{n-1}}} \lesssim \|w\|_{X^{\frac{1}{2},\frac{1}{2}+}_{|\tau|=|\xi|}} $$
and interpolation parameter $\theta = (n+1)\epsilon$ gives
$$\|w\|_{L^q_x L^{\frac{2}{1-2\epsilon}}_t} \lesssim \|w\|_{X^{k+O(\epsilon),\frac{1}{2}+}_{|\tau|=|\xi|}} \lesssim \|w\|_{X^{\frac{1}{2}-\alpha+\epsilon,\frac{1}{2}+}_{|\tau|=|\xi|}} \, .$$
This completes the proof of (\ref{18}). \\
{\bf Proof of (\ref{16}).} 
If $\widehat{\phi}$ is supported in $||\tau|-|\xi|| \gtrsim |\xi|$ we obtain
$$\|\phi\|_{X^{s+\alpha,\frac{1}{2}+\epsilon}_{\tau =0}}
\lesssim \|\phi\|_{X^{s,\frac{3}{4}+\epsilon}_{|\tau|=|\xi|}} \, . $$
which implies that (\ref{16}) follows from (\ref{18}), if $\widehat{\phi}_1$ or $\widehat{\phi}_2$ have this support property. So we may assume that both functions are supported in $||\tau|-|\xi|| \ll |\xi|$. This means that it suffices to show
$$ \int_* m(\xi_1,\xi_2,\xi_3,\tau_1,\tau_2,\tau_3) \prod_{i=1}^3 \widehat{u}_i(\xi_i,\tau_i) d\xi d\tau \lesssim \prod_{i=1}^3 \|u_i\|_{L^2_{xt}} \, , $$
where
$$m= \frac{|\tau_3|\chi_{||\tau_2|-|\xi_2|| \ll |\xi_2|} \chi_{||\tau_3|-|\xi_3|| \ll |\xi_3|}}{\langle \xi_1 \rangle^{1-\alpha-s} \langle \tau_1 \rangle^{\frac{1}{2}-\epsilon-} \langle \xi_2 \rangle^s \langle |\tau_2|-|\xi_2| \rangle^{\frac{3}{4}+\epsilon} \langle \xi_3 \rangle^s \langle |\tau_3|-|\xi_3|\rangle^{\frac{3}{4}+\epsilon}} \, . $$
Since $\langle \tau_3 \rangle \sim \langle \xi_3 \rangle$ , $\langle \tau_2 \rangle \sim \langle \xi_2 \rangle$ and $\tau_1+\tau_2+\tau_3=0$ we have 
\begin{equation}
|\tau_3| \lesssim \langle \tau_1 \rangle^{\frac{1}{2}-\epsilon-} \langle \xi_3 \rangle^{\frac{1}{2}+\epsilon+} +\langle \xi_2 \rangle^{\frac{1}{2}-\epsilon-} \langle \xi_3 \rangle^{\frac{1}{2}+\epsilon+} , 
\end{equation}
Concerning the first term on the right hand side we have to show 
$$\big|\int \int uvw dx dt\big| \lesssim \|u\|_{X^{1-\alpha-s,0}_{\tau=0}} \|v\|_{X^{s,\frac{3}{4}+\epsilon}_{|\tau|=|\xi|}} \|w\|_{X^{s-\frac{1}{2}-\epsilon-,\frac{3}{4}+\epsilon}_{|\tau|=|\xi|}} \, .$$
We use  \cite{FK} ,Thm. 1.1  , which shows
$$\|vw\|_{L^2_t H^{s-\frac{3}{4}}_x} \lesssim \|v\|_{X^{s,\frac{1}{2}+}_{|\tau|=|\xi|}} \|w\|_{X^{s-\frac{1}{2}-\epsilon-,\frac{1}{2}+}_{|\tau|=|\xi|}} $$
under the assumption $s > \frac{n}{2}-\frac{3}{4}$. This is enough, because $\alpha \le \frac{1}{4}$.\\
Concerning the second term on the right hand side we use $\langle \xi_1 \rangle^{s-1+\alpha} \lesssim \langle \xi_2 \rangle^{s-1+\alpha} + \langle \xi_3 \rangle^{s-1+\alpha}$ , so that we reduce to
$$
\big|\int\int uvw dx dt\big| 
\lesssim \|u\|_{X^{0,\frac{1}{2}-\epsilon-}_{\tau=0}} 
\|v\|_{X^{\frac{1}{2}-\alpha+\epsilon+,\frac{3}{4}+\epsilon}_{|\tau|=|\xi|}}
\|w\|_{X^{s-\frac{1}{2}-\epsilon-,\frac{3}{4}+\epsilon}_{|\tau|=|\xi|}} 
$$
and
$$
\big|\int\int uvw dx dt\big| 
\lesssim \|u\|_{X^{0,\frac{1}{2}-\epsilon}_{\tau=0}}
\|v\|_{X^{s-\frac{1}{2}+\epsilon+,\frac{3}{4}+\epsilon}_{|\tau|=|\xi|}}  \|w\|_{X^{\frac{1}{2}-\alpha-\epsilon-,\frac{3}{4}+\epsilon}_{|\tau|=|\xi|}} \, .
$$
We even show the slightly stronger estimate
$$
\big|\int\int uvw dx dt\big| 
\lesssim \|u\|_{X^{0,\frac{1}{2}-\epsilon-}_{\tau=0}} 
\|v\|_{X^{\frac{1}{2}-\alpha-\epsilon-,\frac{3}{4}+\epsilon}_{|\tau|=|\xi|}}
\|w\|_{X^{s-\frac{1}{2}-\epsilon-,\frac{3}{4}+\epsilon}_{|\tau|=|\xi|}} \, ,
$$
which implies both. We start with the estimate
\begin{align*}
\big|\int \int uvw dx dt\big|
 \lesssim \|u\|_{L^2_x L^{\frac{1}{\epsilon}-}_t} \|v\|_{L^p_x L^{\frac{2}{1-2\epsilon}+}_t} \|w\|_{L^q_x L^2_t} \, ,
\end{align*}
where $\frac{1}{p} + \frac{1}{q} = \frac{1}{2}$.  Interpolating (\ref{Tao})
$$ \|v\|_{L^{\frac{2(n+1)}{n-1}}_x L^2_t} \lesssim \|v\|_{X^{\frac{n-1}{2(n+1)},\frac{1}{2}+}_{|\tau|=|\xi|}} $$
with the trivial identity $\|v\|_{L^2_x L^2_t} = \|v\|_{X^{0,0}_{|\tau|=|\xi|}}$ with interpolation parameter $\theta$ given by $\theta \frac{n-1}{2(n+1)} = \frac{1}{2}-\alpha-2\epsilon$ (where we remark that $\theta < 1$) this gives
$$\|v\|_{L^{\tilde{p}}_x L^2_t}  \lesssim \|v\|_{X^{\frac{1}{2}-\alpha-O(\epsilon),\frac{1}{2}+}_{|\tau|=|\xi|}} \, , $$
where
 $$\frac{1}{\tilde{p}} = \frac{n-1}{2(n+1)} \theta + (1-\theta)\frac{1}{2} = \frac{n-3}{2(n-1)} + \frac{2}{n-1}\alpha + O(\epsilon) \, . $$
 Interpolating this estimate with Strichartz' estimate just slightly changing the parameters we obtain
 $$ \|v\|_{L^p_x L^{\frac{2}{1-2\epsilon}+}_x} \lesssim \|v\|_{X^{\frac{1}{2}-\alpha-\epsilon-,\frac{1}{2}+}_{|\tau|=|\xi|}} \, , $$
 where $\frac{1}{p}= \frac{1}{\tilde{p}}+ O(\epsilon)$. Thus $\frac{1}{q} = \frac{1}{2}-\frac{1}{p}= \frac{1}{n-1} - \frac{2}{n-1}\alpha - O(\epsilon)$.\\
Next we apply (\ref{Tao}) to obtain
$$ \|w\|_{L^q_x L^2_t} \lesssim \|w\|_{X^{k+\frac{n-1}{2(n+1)},\frac{1}{2}+}_{|\tau|=|\xi|}} $$
with
$$ \frac{1}{q} = \frac{n-1}{2(n+1)} - \frac{k}{n} \, \Leftrightarrow \, k = n(\frac{n-1}{2(n+1)} - \frac{1}{n-1} + \frac{2}{n-1}\alpha) + O(\epsilon) \, . $$
In order to conclude the desired estimate
$$ \|w\|_{L^q_x L^2_t} \lesssim \|w\|_{X^{s-\frac{1}{2}-\epsilon-,\frac{1}{2}+}_{|\tau|=|\xi|}} $$
we need
\begin{equation}
\label{*****}
 s \ge k + \frac{n}{n+1} + O(\epsilon) = \frac{n}{2} - \frac{n}{n-1} + \frac{2n}{n-1}\alpha + O(\epsilon) \, . 
 \end{equation}
This means that in order to obtain a minimal lower bound for $s$ one should also minimize $\alpha$. On the other hand in the proof of (\ref{30}) below we have to maximize $\alpha$.
Comparing condition (\ref{*****}) with (\ref{****}) below we optimize $\alpha$ by choosing
\begin{equation}
\label{*}
\frac{n}{2} - \frac{n}{n-1} + \frac{2n}{n-1}\alpha = \frac{n}{2} - \frac{1}{4} -2\alpha \, \Leftrightarrow \, \alpha = \frac{3n+1}{8(2n-1)} \, , 
\end{equation}
which leads to our choice of $\alpha$. Thus the condition on $s$ reduces to
$$ s \ge  \frac{n}{2} - \frac{1}{4} -2\alpha +O(\epsilon) = \frac{n}{2} - \frac{5}{8} - \frac{5}{8(2n-1)} + O(\epsilon) \, . $$
This is exactly our assumption on $s$.\\
{\bf Proof of (\ref{19}):} Sobolev's multiplication law shows the estimate
$$ \| |\nabla|^{-1} (A_1 \partial_t A_2)\|_{C^0(H^{s-1})} \lesssim \|A_1\|_{C^0(H^s)} \|\partial_t A_2\|_{C^0(H^{s-1})}$$
for $s > \frac{n}{2}-1$. Use now $$ A=A^{cf} + \sum_{\pm} A^{df}_{\pm} \quad , \quad \partial_t A = \partial_t A^{cf} + i \langle \nabla \rangle(A_+^{df} -A_-^{df}) \, $$
from which the estimate (\ref{19}) easily follows. \\
{\bf Proof of (\ref{29}):} This a generalization of the proof given by  Tao (\cite{T1}) in dimension $n=3$. We have to show
$$
\int_* m(\xi,\tau) \prod_{i=1}^3 \widehat{u}_i(\xi_i,\tau_i)  d\xi d\tau \lesssim \prod_{i=1}^3 \|u_i\|_{L^2_{xt}} \, , 
$$
where $\xi=(\xi_1,\xi_2,\xi_3) \, , \,\tau=(\tau_1,\tau_2,\tau_3)$ , * denotes integration over $ \sum_{i=1}^3 \xi_i = \sum_{i=1}^3 \tau_i = 0$ , and 
$$ m = \frac{(|\xi_2|+|\xi_3|) \langle \xi_1 \rangle^{s-1} \langle |\tau_1|-|\xi_1|) \rangle^{-\frac{1}{4}+2\epsilon}}{\langle \xi_2 \rangle^s \langle |\tau_2| - |\xi_2|\rangle^{\frac{3}{4}+\epsilon}  \langle \xi_3 \rangle^{s+\alpha}\langle \tau_3 \rangle^{\frac{1}{2}+\epsilon}} \, .$$
Case 1: $|\xi_2| \le |\xi_1|$ ($\Rightarrow$ $|\xi_2|+|\xi_3| \lesssim |\xi_1|$). \\
By two applications of the averaging principle (\cite{T}, Prop. 5.1) we may replace $m$ by
$$ m' = \frac{ \langle \xi_1 \rangle^s \chi_{||\tau_2|-|\xi_2||\sim 1} \chi_{|\tau_3| \sim 1}}{ \langle \xi_2 \rangle^s \langle \xi_3 \rangle^{s+\alpha}} \, . $$
Let now $\tau_2$ be restricted to the region $\tau_2 =T + O(1)$ for some integer $T$. Then $\tau_1$ is restricted to $\tau_1 = -T + O(1)$, because $\tau_1 + \tau_2 + \tau_3 =0$, and $\xi_2$ is restricted to $|\xi_2| = |T| + O(1)$. The $\tau_1$-regions are essentially disjoint for $T \in {\mathbb Z}$ and similarly the $\tau_2$-regions. Thus by Schur's test (\cite{T}, Lemma 3.11) we only have to show
\begin{align*}
 &\sup_{T \in {\mathbb Z}} \int_* \frac{\langle \xi_1 \rangle^s \chi_{\tau_1=-T+O(1)} \chi_{\tau_2=T+O(1)} \chi_{|\tau_3|\sim 1} \chi_{|\xi_2|=|T|+O(1)}}{\langle \xi_2 \rangle^s \langle \xi_3 \rangle^{s+\alpha}} \prod_{i=1} \widehat{u}_i(\xi_i,\tau_i)  d\xi d\tau  \\
 & \hspace{25em} \lesssim \prod_{i=1}^3 \|u_i\|_{L^2_{xt}} \, . 
\end{align*}
The $\tau$-behaviour of the integral is now trivial, thus we reduce to
\begin{equation}
\label{55}
\sup_{T \in {\mathbb N}} \int_{\sum_{i=1}^3 \xi_i =0}  \frac{ \langle \xi_1 \rangle^s \chi_{|\xi_2|=|T|+O(1)}}{ \langle T \rangle^s \langle \xi_3 \rangle^{s+\alpha}} \widehat{f}_1(\xi_1)\widehat{f}_2(\xi_2)\widehat{f}_3(\xi_3)d\xi \lesssim \prod_{i=1}^3 \|f_i\|_{L^2_x} \, .
\end{equation}
Assuming now $|\xi_3| \le |\xi_1|$ (the other case being simpler) 
it only remains to consider the following two cases: \\
Case 1.1: $|\xi_1| \sim |\xi_3| \gtrsim T$. We obtain in this case
\begin{align*}
L.H.S. \, of \, (\ref{55}) 
&\lesssim \sup_{T \in{\mathbb N}} \frac{1}{T^{s+\alpha}} \|f_1\|_{L^2} \|f_3\|_{L^2} \| {\mathcal F}^{-1}(\chi_{|\xi|=T+O(1)} \widehat{f}_2)\|_{L^{\infty}({\mathbb R}^n)} \\
&\lesssim \sup_{T \in{\mathbb N}} \frac{1}{ T^{s+\alpha}} 
\|f_1\|_{L^2} \|f_3\|_{L^2} \| \chi_{|\xi|=T+O(1)} \widehat{f}_2\|_{L^1({\mathbb R}^n)} \\
&\lesssim \hspace{-0.1em}\sup_{T \in {\mathbb N}} \frac{T^{\frac{n-1}{2}}}{T^{s+\alpha}}  \prod_{i=1}^3 \|f_i\|_{L^2} \lesssim\hspace{-0.1em}
\prod_{i=1}^3 \|f_i\|_{L^2} \, ,
\end{align*}
because one easily calculates that $2(s+\alpha) > n-1$ under our choice of $s$ and $\alpha$.
Case 1.2: $|\xi_1| \sim T \gtrsim |\xi_3|$. 
An elementary calculation shows that
\begin{align*}
L.H.S. \, of \,  (\ref{55})
\lesssim \sup_{T \in{\mathbb N}} \| \chi_{|\xi|=T+O(1)} \ast \langle \xi \rangle^{-2(s+\alpha)}\|^{\frac{1}{2}}_{L^{\infty}(\mathbb{R}^{n-1})} \prod_{i=1}^3 \|f_i\|_{L^2_x} \lesssim \prod_{i=1}^3 \|f_i\|_{L^2_x} \, ,
\end{align*}
using as in case 1.1 that $2(s+\alpha) > n-1$ ,
so that the desired estimate follows.\\
Case 2. $|\xi_1| \le |\xi_2|$ ($\Rightarrow$ $|\xi_2|+|\xi_3| \lesssim |\xi_2|$). \\
Exactly as in case 1 we reduce to
$$
\sup_{T \in {\mathbb N}} \int_{\sum_{i=1}^3 \xi_i =0}  \frac{ \langle \xi_1 \rangle^{s-1} \chi_{|\xi_2|=|T|+O(1)}}{ \langle T \rangle^{s-1} \langle \xi_3 \rangle^{s+\alpha}} \widehat{f}_1(\xi_1)\widehat{f}_2(\xi_2)\widehat{f}_2(\xi_3)d\xi \lesssim \prod_{i=1}^3 \|f_i\|_{L^2_x} \, .
$$
This can be treated as in case 1.\\
{\bf Proof of (\ref{30}):} By Sobolev's multiplication law we obtain
$$
 |\int \int fgh dx dt| \lesssim \|f\|_{X^{s+\alpha,\frac{1}{2}+\epsilon}_{\tau=0}}
\|g\|_{X^{s+\alpha -1,\frac{1}{2}+\epsilon}_{\tau=0}}
\|h\|_{X^{-s+\frac{5}{4}-2\epsilon,-\frac{1}{2}}_{\tau=0}} \, ,
$$
where we need that 
\begin{equation}
\label{****}
s+2\alpha+\frac{1}{4}-2\epsilon > \frac{n}{2} \, ,
\end{equation}
which holds under our assumptions on $s$ and $\alpha$.
Using the elementary estimate 
$$ \frac{\langle \xi \rangle^{\frac{1}{4}-2\epsilon}}{\langle \tau \rangle^{\frac{1}{4}-2\epsilon}} \lesssim \langle |\tau|-|\xi| \rangle^{\frac{1}{4}-2\epsilon}$$
 we obtain 
$$\|h\|_{X^{-s+\frac{5}{4}-2\epsilon,-\frac{1}{2}}_{\tau=0}}  \lesssim \|h\|_{[X^{1-s,\frac{1}{4}-2\epsilon}_{|\tau|=|\xi|}} \,$$
which implies (\ref{30}).
\\
{\bf Proof of (\ref{31}):} We use the following consequences of Sobolev's embedding and Strichartz' inequality:
\begin{align}
\|A\|_{L^{\infty}_t H^{s+\alpha}_x} & \lesssim \|A\|_{X^{s+\alpha,\frac{1}{2}+}_{\tau=0}} \, , \\
\label{56}
\|A\|_{L^{4-}_t H^{1-s}_x} & \lesssim \|A\|_{X^{1-s,\frac{1}{4}-}_{|\tau|=|\xi|}} \\
\label{Str}
\|A\|_{L^4_t H^{s-\frac{n+1}{4(n-1)},\frac{2(n-1)}{n-2}}_x} & \lesssim \|A\|_{X^{s,\frac{1}{2}+}_{|\tau|=|\xi|}} \, ,
\end{align}
where we applied (\ref{15}) with $q=4$ , $r= \frac{2(n-1)}{n-2}$ , $\mu = \frac{n+1}{4(n-1)}$ and also
\begin{equation}
\label{57}
\|A\|_{L^{4+}_t H^{s-\frac{n+1}{4(n-1)}+,\frac{2(n-1)}{n-2}-}_x}  \lesssim \|A\|_{X^{s,\frac{1}{2}+}_{|\tau|=|\xi|}} \, . 
\end{equation}
Assume now $s \ge 1$.
Taking the dual of (\ref{56}) we obtain
$$
\|A_1 A_2 A_3\|_{X^{s-1,-\frac{1}{4}+}_{|\tau|=|\xi|}} \lesssim \|A_1 A_2 A_3\|_{L^{\frac{4}{3}+}_t H^{s-1}_x} \, .
$$
This can be estimated by
$$
 \|A_1\|_{L^{4+}_t H^{s-1,p-}_x} \|A_2\|_{L^4_t L^{\frac{2n}{1+\alpha}+}_x} \|A_3\|_{L^4_t L^{\frac{2n}{1+\alpha}+}_x} 
$$ 
where $\frac{1}{p} = \frac{1}{2} - \frac{1+\alpha}{n}$ , and similar terms with reversed roles of $A_j$ 
 . Now by Sobolev we have
$ H^{s+\alpha,2}_x \subset H^{s-1,p-}_x$ , so that
$$\|A_1\|_{L^{4+}_t H^{s-1,p-}_x} \lesssim \|A_1\|_{X^{s+\alpha,\frac{1}{2}+}_{\tau=0}} \, . $$
Next we obtain $H^{s-\frac{n+1}{4(n-1)}+,\frac{2(n-1)}{n-2}-}_x \subset H^{s-1,p-}_x$ , because
$$\frac{1}{p} > \frac{n-2}{2(n-1)} - \frac{1}{n}(1-\frac{n+1}{4(n-1)}) \, \Leftrightarrow \frac{3n+1}{4n(n-1)} > \frac{\alpha}{n} \, , $$
which holds, because $\frac{1}{4} \ge \alpha = \frac{3n+1}{8(2n-1)} \ge \frac{3}{16}$ . This implies by (\ref{57}) 
$$\|A_1\|_{L^{4+}_t H^{s-1,p-}_x} \lesssim \|A_1\|_{X^{s,\frac{1}{2}+}_{|\tau|=|\xi|}} \, . $$
Next we have $H^{s+\alpha,2}_x \subset L^{\frac{2n}{1+\alpha}+}_x $ , because the inequality 
$$\frac{1+\alpha}{2n} > \frac{1}{2} -\frac{s+\alpha}{n}$$
holds by $s> \frac{n}{2} - \frac{3}{4}$ and $\alpha \ge \frac{3}{16}$ . This implies
$$\|A_j\|_{L^4_t L^{\frac{2n}{1+\alpha}+}_x} \lesssim \|A_j\|_{L^4_t H^{s+\alpha,2}_x }\lesssim \|A_j\|_{X^{s+\alpha,\frac{1}{2}+}_{\tau=0}} \, . $$
Finally by Sobolev $H^{s-\frac{n+1}{4(n-1)},\frac{2(n-1)}{n-2}}_x \subset L^{\frac{2n}{1+\alpha}+}_x$ , because one easily calculates that
$ \frac{1+\alpha}{2n} > \frac{n-2}{2(n-1)} - \frac{1}{n}(s-\frac{n+1}{4(n-1)}) $ using $s \ge \frac{n}{2}-\frac{3}{4}$ and $\alpha \ge \frac{3}{16}$. Thus 
by (\ref{Str})
$$\|A_j\|_{L^4_t L^{\frac{2n}{1+\alpha}+}_x} \lesssim \|A_j\|_{L^4_t H^{s-\frac{n+1}{4(n-1)},\frac{2(n-1)}{n-2}}_x} \lesssim \|A_j\|_{X^{s,\frac{1}{2}+}_{|\tau| = |\xi|}} \, . $$
This completes the proof of (\ref{31}) for $s \ge 1$. It remains to consider the case $1>s> \frac{3}{4}$ in dimension $n=3$ and $\alpha = \frac{1}{4}$. This case is much easier. We only use
$$ \|A\|_{L^{4-}_t L^2_x} \lesssim \|A\|_{X^{0,\frac{1}{4}-}_{|\tau|=|\xi|}} \lesssim \|A\|_{X^{1-s,\frac{1}{4}-}_{|\tau|=|\xi|}} \, , $$
so that by duality
$$ \|A_1 A_2 A_3\|_{X^{s-1,-\frac{1}{4}+}_{|\tau|=|\xi|}} \lesssim  \|A_1 A_2 A_3\|_{L^{\frac{4}{3}+}_t L^2_x} \lesssim \prod_{I=1}^3 \|A_i\|_{L^{4+}_t L^6_x} \, . $$
Now by Sobolev for $s>\frac{3}{4}$ we obtain
$$ \|A_i\|_{L^{4+}_t L^6_x} \lesssim \|A_i\|_{L^{4+}_t H^1_x} \lesssim \|A_i\|_{X^{s+\frac{1}{4},\frac{1}{2}+}_{\tau=0}} \, , $$
and using Sobolev's embedding and
Strichartz' inequality (\ref{15}) gives
$$ \|A_i\|_{L^{4+}_t L^6_x} \lesssim \|A_i\|_{L^{4+}_t H^{\frac{1}{4}+,4-}_x} \lesssim \|A_i\|_{X^{\frac{3}{4}+,\frac{1}{2}+}_{|\tau|=|\xi|}} \lesssim  \|A_i\|_{X^{s,\frac{1}{2}+}_{|\tau|=|\xi|}} \, . $$ \\
{\bf Proof of (\ref{32}):} The case $N = 3$ reduces to (\ref{31}). Next we consider  the case $N=4$ in dimension $n=3$. We may assume $s\le 1$, because the general case can be reduced to this case easily. This follows from Prop. \ref{Prop.2} as follows:
$$ \| |\phi|^3 \phi\|_{X^{s-1,-\frac{1}{4}+}_{|\tau|=|\xi|}} \lesssim \| |\phi|^3 \phi\|_{L^{\frac{4}{3}+}_t H^{s-1}_x} \lesssim \| |\phi|^3 \phi\|_{L^{\frac{4}{3}+}_t L^p_x} \lesssim \|\phi\|_{L^{\frac{16}{3}+}_t L^{4p}_x}^4 \, ,$$
where $\frac{1}{p} = \frac{1}{2}-\frac{s-1}{3}$ . We now use Strichartz estimate (\ref{15})  with $q=\frac{16}{3}+$, $r=\frac{16}{5}-$,  $\mu=\frac{3}{8}+$ to conclude
$$\| |\phi|^3 \phi\|_{X^{s-1,-\frac{1}{4}+}_{|\tau|=|\xi|}} \lesssim   \|\phi\|_{L^{\frac{16}{3}+}_t H^{l,\frac{16}{5}-}_x}^4 \lesssim  \|\phi\|_{X^{l+\mu,\frac{1}{2}+}_{|\tau|=|\xi|}}^4 \lesssim  \|\phi\|_{X^{s,\frac{1}{2}+}_{|\tau|=|\xi|}}^4 \, , $$
provided $H^{l,\frac{16}{5}-}_x \subset L^{4p}_x$, which is fulfilled, if $l=\frac{5+4s}{16}+$, so that $l+\mu \le s$, if $\frac{5+4s}{16} + \frac{3}{8} < s \, \Leftrightarrow \, s > \frac{11}{12}$, which is equivalent to our assumption $N < 1+\frac{7}{4(\frac{n}{2}-s)}$. The case $N=2$ for $n=3$ is much easier handled by the standard Strichartz inequality:
$$ \| |\phi| \phi \|_{H^{s-1,-\frac{1}{4}+}_{|\tau|=|\xi|}} \lesssim \|\phi\|_{L^4_t L^4_x}^2 \lesssim \|\phi\|_{X^{\frac{1}{2},\frac{1}{2}+}_{|\tau|=|\xi|}} \, . $$
In all the other cases under our assumptions we have $s\ge 1$. We have
$$ \|  |\phi|^{N-1} \phi\|_{X^{s-1,-\frac{1}{4}+}_{|\tau|=|\xi|}} \lesssim \| |\phi|^{N-1} \phi\|_{L^{\frac{4}{3}+}_t H^{s-1}_x} \lesssim  \|\phi\|_{L^{\frac{4}{3}N+}_t L^{\tilde{q}}_x}^{N-1} \|\phi\|_{L^{\frac{4}{3}N+}_t H^{s-1,p}_x} \, .$$
Here $\frac{1}{p} + \frac{N-1}{\tilde{q}} = \frac{1}{2}$. We obtain $H^{s-1,p} \subset L^{\tilde{q}}$ , if $\frac{1}{\tilde{q}} = \frac{1}{p} - \frac{s-1}{n}$ , so that
$$ \frac{1}{p} = \frac{1}{N}\Big(\frac{1}{2}+\frac{(N-1)(s-1)}{n}\Big) \, , $$
thus
$$  \| |\phi|^{N-1} \phi\|_{X^{s-1,-\frac{1}{4}+}_{|\tau|=|\xi|}} \lesssim   \|\phi\|_{L^{\frac{4}{3}N+}_t H^{s-1,p}_x}^N \, . $$
The case $N=2$ is again easy. In this case we have $ \frac{1}{p} = \frac{s-1}{2n} + \frac{1}{4}$, which implies by Sobolev $H^{s,2} \subset H^{s-1,p}$ under the condition $\frac{1}{p} \ge \frac{1}{2}-\frac{1}{n}$, which is easily seen to be equivalent to $s \ge \frac{n}{2} -1$, which certainly holds, so that we obtain the desired bound $\|\phi\|_{X^{s,\frac{1}{2}+}_{|\tau|=|\xi|}}^2$.

It remains to consider $N\ge 4$. We use Strichartz' estimate (\ref{15}) with $q=\frac{4}{3}N+$,  $\frac{1}{r} = \frac{1}{2} - \frac{3}{2N(n-1)} +$ , $\mu = n(\frac{1}{2}-\frac{1}{r}) - \frac{1}{q} = \frac{3(n+1)}{4N(n-1)} +$ to conclude
\begin{equation}
\label{60}
 \| |\phi|^{N-1} \phi\|_{X^{s-1,-\frac{1}{4}+}_{|\tau|=|\xi|}} \lesssim  \|\phi\|_{L^{\frac{4}{3}N+}_t H^{l,r}_x}^N \lesssim   \|\phi\|_{X^{l+\mu,\frac{1}{2}+}_{|\tau|=|\xi|}}^N \lesssim    \|\phi\|_{X^{s,\frac{1}{2}+}_{|\tau|=|\xi|}}^N  \, ,
\end{equation}
if we $H^{l,r} \subset H^{s-1,p}$ and  $l+\mu \le s$. By Sobolev we need
$$ \frac{1}{r} \ge \frac{1}{p} \ge \frac{1}{r}-\frac{l-s+1}{n} \, . $$
We calculate
\begin{align}
\nonumber
\frac{1}{r} \ge \frac{1}{p} \, &\Leftrightarrow \, \frac{1}{2} - \frac{3}{2N(n-1)} > \frac{1}{N}\Big(\frac{1}{2}+\frac{(N-1)(s-1)}{n}\Big) \\
\label{61}
 &\Leftrightarrow \, s < \frac{n}{2} + 1 - \frac{3n}{(N-1)2(n-1)} \, .
\end{align}
In this case we can choose $l=\frac{n}{r}-\frac{n}{p}+s-1$ , so that one easily calculates
\begin{align*}
&l+\mu \le s \\
&\, \Leftrightarrow n\Big(\frac{1}{2}-\frac{3}{2N(n-1)}\Big) - \frac{n}{N}\Big(\frac{1}{2}+\frac{(N-1)(s-1)}{n}\Big)+s-1 + \frac{3(n+1)}{4N(n-1)} < s \\ &\Leftrightarrow \, s > \frac{n}{2}-\frac{7}{4(N-1)} \, \Leftrightarrow \, \Big( N < 1+\frac{7}{4(\frac{n}{2}-s)}  \,\, {\mbox if}\, s < \frac{n}{2} \,\, {\mbox and} \, N < \infty \,\, {\mbox if} \, s \ge \frac{n}{2} \Big) \, . 
\end{align*}
This is exactly our assumption on $s$ and $N$. This lower bound on $s$ and also the lower bound on $s$ in Prop \ref{Prop} is compatible with the upper bound (\ref{61}) in our case $N \ge 4$ and $n \ge 3$, as an easy calculation shows. As always the desired estimate (\ref{60}) for greater $s$ can be reduced to this case so that (\ref{61}) is redundant.
Thus (\ref{60}) is proven. This completes the proof of (\ref{32}) and also the proof of Prop. \ref{Prop} and Prop. \ref{Prop'}.
\end{proof}

\section{Removal of the assumption $A^{cf}(0)=0$}
Applying an idea of Keel and Tao \cite{T1} we use the gauge invariance of the Yang-Mills-Higgs system to show that the condition $A^{cf}(0)=0$, which had to be assumed in Prop. \ref{Prop}, can be removed. A completely analogous result holds for the Yang-Mills equation and Prop. \ref{Prop'}.
\begin{lemma}
\label{Lemma} Let $ n\ge 3$ , $s>\frac{n}{2}-\frac{3}{4}$ and $0 < \epsilon \ll 1$. Assume 
 $(A,\phi)\in( C^0([0,1],H^s) \cap C^1([0,1],H^{s-1}) \times (C^0([0,1],H^s) \cap C^1([0,1],H^{s-1}))$ , $A_0 = 0$ and
\begin{equation}
\label{***}
\|A^{df}(0)\|_{H^s} + \|(\partial_t A)^{df}(0)\|_{H^{s-1}} + \|A^{cf}(0)\|_{H^s} + \|\phi(0)\|_{H^s} + \|(\partial_t \phi)(0)\|_{H^{s-1}} \le \epsilon \, .
\end{equation}
Then there exists a gauge transformation $T$ preserving the temporal gauge such that $(TA)^{cf}(0) = 0$ and
\begin{align}
\label{T1}
\|(TA)^{df}(0)\|_{H^s} + \|(\partial_t TA)^{df}(0)\|_{H^{s-1}} + \|(T \phi)(0)\|_{H^s} + \|(\partial_t T\phi)(0)\|_{H^{s-1}} \lesssim \epsilon \, .
\end{align}
T preserves also the regularity, i.e. $TA\in C^0([0,1],H^s) \cap C^1([0,1],H^{s-1})$ , $T\phi \in C^0([0,1],H^s) \cap C^1([0,1],H^{s-1})$. If
$A \in X^{s,\frac{3}{4}+}_+[0,1] + X^{s,\frac{3}{4}+}_-[0,1] + X^{s+\alpha,\frac{1}{2}+}_{\tau=0}[0,1]$, where $\alpha = \frac{3n+1}{8(2n-1)}$ , $\partial_t A^{cf} \in C^0([0,1],H^{s-1})$ and $\phi\in X^{s,\frac{3}{4}+}_+[0,1] + X^{s,\frac{3}{4}+}_-[0,1]$ , then $TA$ , $T\phi$ belong to the same spaces. Its inverse $T^{-1}$ has the same properties.
\end{lemma}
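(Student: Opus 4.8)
The plan is to construct the gauge transformation explicitly as a time-independent transformation $U = U(x)$, so that the temporal gauge $A_0 = 0$ is automatically preserved (since $A_0' = UA_0U^{-1} - (\partial_0 U)U^{-1} = 0$ when $U$ does not depend on $t$). Under such a $U$, the spatial components transform as $A_j' = UA_jU^{-1} - (\partial_j U)U^{-1}$, and the curl-free part is governed by $\mathrm{div}\, A' $. We want to choose $U$ so that $(TA)^{cf}(0) = 0$, i.e. so that $\mathrm{div}(UA_j(0)U^{-1} - (\partial_j U)U^{-1}) = 0$ in the appropriate sense (more precisely so that the Riesz-projection onto the curl-free part of $A'(0)$ vanishes). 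Following the Keel–Tao idea referenced via \cite{T1}, I would set up the equation for $U$ by first solving, at $t=0$, an elliptic equation: write $U = e^{W}$ with $W: \mathbb{R}^n \to g$ to be determined, linearize, and observe that the leading term in $\mathrm{div}(A')$ is $\mathrm{div}(A(0)) - \Delta W + (\text{lower order})$. Since $A^{cf}(0) = |\nabla|^{-1}\,\mathrm{div}$-type quantity is small in $H^s$, one solves $\Delta W = \mathrm{div}(A(0)) + Q(W, A(0))$ by a fixed-point argument in $H^{s+1}$ (or the relevant space), using the smallness hypothesis \eqref{***} to close the contraction; the nonlinear terms $Q$ are products handled by Sobolev multiplication since $s > \frac{n}{2} - \frac{3}{4} > \frac{n}{2} - 1$. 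This produces $W(0) \in H^{s+1}$ small, hence $U(0) = e^{W(0)}$ with $U(0) - I \in H^{s+1}$ small.

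Next I would extend $U$ to all of $[0,1]$ simply by keeping it time-independent: $U(t,x) := U(0,x)$ for all $t$. Then $TA$ and $T\phi$ are defined on all of $[0,1]$ by the gauge transformation formulas with this fixed $U$. The estimate \eqref{T1} at $t=0$ follows from the algebra property of $H^s$ for $s > \frac{n}{2}$... — but since we only assume $s > \frac{n}{2} - \frac{3}{4}$, I would instead use that $U(0) - I$ lies in the better space $H^{s+1} \hookrightarrow$ an algebra-type multiplier space, so that multiplication by $U(0)$ and $U(0)^{-1}$ is bounded on $H^s$ and on $H^{s-1}$, and that $\partial_j U(0) \in H^s$ contributes the term $(\partial_j U)U^{-1} \in H^s$ with small norm. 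For $\partial_t (TA)^{df}(0)$ one differentiates the gauge formula in $t$; since $U$ is $t$-independent, $\partial_t(TA_j) = U (\partial_t A_j) U^{-1}$, and its divergence-free projection is controlled in $H^{s-1}$ by $\|\partial_t A(0)\|_{H^{s-1}}$ times powers of $\|U(0)\|$. The Higgs field transforms as $T\phi = U\phi U^{-1}$, so $\|(T\phi)(0)\|_{H^s}$ and $\|\partial_t(T\phi)(0)\|_{H^{s-1}} = \|U(\partial_t\phi)(0)U^{-1}\|_{H^{s-1}}$ are likewise controlled.

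For the regularity-preservation claims, the key observation is again that $U$ is time-independent, so conjugation by $U$ and $U^{-1}$ is a fixed multiplication operator; hence $TA \in C^0([0,1],H^s) \cap C^1([0,1],H^{s-1})$ follows from boundedness of multiplication by $U, U^{-1} \in I + H^{s+1}$ on $H^s$ and $H^{s-1}$, and the extra additive term $-(\partial_j U)U^{-1} \in H^s$ is constant in $t$ hence lies in $C^\infty([0,1],H^s)$. For the $X^{s,b}$-type spaces: since $U$ depends only on $x$, multiplication by $U$ (resp. $U^{-1}$) is a Fourier multiplier in the $\xi$-variable only, and one needs that pointwise multiplication by an $H^{s+1}(\mathbb{R}^n_x)$ function maps $X^{s, \frac34+}_{\pm}$, $X^{s+\alpha,\frac12+}_{\tau=0}$ into themselves; this reduces to a bilinear $X^{s,b}$ estimate of the form $\|fu\|_{X^{s,b}} \lesssim \|f\|_{H^{s+1}(\mathbb{R}^n)}\|u\|_{X^{s,b}}$, which follows from the same Sobolev-multiplication / Strichartz machinery used for \eqref{29}–\eqref{31} (the time-Fourier behaviour is untouched because $f$ is independent of $t$). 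The curl-free piece $A^{cf}$ transforms with an extra $\Delta W$-type correction but the correction again lies in the required space by the elliptic regularity of $W$. Finally $T^{-1}$ is the gauge transformation associated with $U^{-1} = e^{-W(0)}$, which has all the same properties by symmetry. I expect the main obstacle to be a slightly delicate point: verifying that the fixed-point construction of $W$ at $t = 0$ actually yields $(TA)^{cf}(0) = 0$ exactly — one must be careful that the elliptic equation $\Delta W = \mathrm{div}(A(0)) + (\text{nonlinear})$ is solved in a space where the full nonlinear curl-free constraint, and not merely its linearization, is annihilated; this is where the precise form of the gauge transformation and the smallness of \eqref{***} must be combined with care, and where the $H^{s+1}$-gain in regularity for $W$ (hence one extra derivative over $A$, matching the $\alpha$-gain for $A^{cf}$) is essential.
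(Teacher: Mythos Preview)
Your strategy of using a single time-independent $U=e^W$ with $W$ solving a nonlinear elliptic equation by contraction is sound and is essentially Uhlenbeck's Coulomb-gauge argument; the paper takes a different constructive route. It builds $U$ iteratively as an infinite product $U=\prod_{l=\infty}^{1}\exp V_l$, where each $V_k:=-(-\Delta)^{-1}\mathrm{div}\,(T_{k-1}A)(0)$ solves only the \emph{linearised} equation, and one checks by explicit Taylor expansion (using that $\nabla V_k=(T_{k-1}A)^{cf}(0)$ so the leading terms of $\exp(V_k)\nabla V_k-\nabla\exp(V_k)$ cancel) that $\|(T_kA)^{cf}(0)\|_{H^s}\le\tfrac12\epsilon^{(k+2)/2}$; passing to the limit then kills the curl-free part exactly. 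The iterative route avoids ever writing down the nonlinear fixed-point map and makes all smallness constants explicit; your route is more direct but requires setting up the contraction carefully. Two points in your sketch need tightening: (i) $W$ does not lie in $H^{s+1}$, since $(-\Delta)^{-1}\mathrm{div}$ gives no low-frequency control---the correct working space is the seminorm $\|W\|_X:=\|\nabla W\|_{H^s}$, and the multiplier properties you need are $\|fg\|_X\lesssim\|f\|_X\|g\|_X$ and $\|fg\|_{H^\sigma}\lesssim\|f\|_X\|g\|_{H^\sigma}$ for $\sigma\in\{s-1,s\}$, which is exactly what the paper records as a separate product lemma; (ii) your justification ``the time-Fourier behaviour is untouched'' for the $X^{s,3/4+}_\pm$-preservation is not correct, since multiplication by $f(x)$ convolves in $\xi$ and genuinely moves the weight $\langle\tau\mp|\xi|\rangle$. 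The paper isolates this as its own lemma, proving $\|uv\|_{X^{s,3/4+\epsilon}_\pm}\lesssim\|\nabla u\|_{X^{s,1}_\pm}\|v\|_{X^{s,3/4+\epsilon}_\pm}$ via the elementary phase inequality $|(\tau_1{+}\tau_2)\mp|\xi_1{+}\xi_2||\le|\tau_1\mp|\xi_1||+|\tau_2\mp|\xi_2||+2\min(|\xi_1|,|\xi_2|)$ followed by standard bilinear $L^2$ bounds; for time-independent $u$ this collapses (after a time cutoff) to the $H^s$-type estimate you stated.
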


In the proof we frequently use 
\begin{lemma} Let $ n \ge 3$ , $s > \frac{n}{2}-1$ 
and define $\|f\|_X := \|\nabla f\|_{H^s}$ .
The following estimates hold:
\begin{align*}
\| fg \|_X &\le c_1 \|f\|_X \|g\|_X \\
\| fg \|_{H^s} &\le c_1 \|f\|_X \|g\|_{H^s} \\
\| fg \|_{H^{s-1}} &\le c_1 \|f\|_X \|g\|_{H^{s-1}} \, .
\end{align*}
\end{lemma}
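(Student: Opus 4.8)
The plan is to derive all three inequalities from the standard Sobolev multiplication law $H^{s_1}\cdot H^{s_2}\hookrightarrow H^{s_3}$, which holds whenever $s_1+s_2\ge 0$, $s_i\le\min(s_1,s_2)$ (with strict inequality replaced appropriately at endpoints), and $s_1+s_2-s_3>\frac n2$. The point of the norm $\|f\|_X=\|\nabla f\|_{H^s}$ is that $f$ itself need not be in $L^2$ — only its gradient is controlled — so one works throughout at the level of frequency-localized pieces and distinguishes low and high frequencies of each factor.

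\textbf{Step 1: the estimate $\|fg\|_X\le c_1\|f\|_X\|g\|_X$.} Write $\|fg\|_X=\|\nabla(fg)\|_{H^s}\le\|(\nabla f)g\|_{H^s}+\|f(\nabla g)\|_{H^s}$. For the first term, decompose $g=g_{lo}+g_{hi}$ into frequencies $\lesssim 1$ and $\gtrsim 1$. On the high part, $\|(\nabla f)g_{hi}\|_{H^s}\lesssim\|(\nabla f)(\nabla g)\|_{H^{s-1}}$ after absorbing one derivative, and then Sobolev multiplication ($H^s\cdot H^{s-1}\hookrightarrow H^{s-1}$, valid since $s>\frac n2-1$, hence $s+(s-1)-(s-1)=s>\frac n2-1$... one needs $s>\frac n2$; here $\nabla f\in H^s$ and $\nabla g\in H^s$, so $H^s\cdot H^s\hookrightarrow H^{s-1}$ needs $2s-(s-1)=s+1>\frac n2$, i.e. $s>\frac n2-1$, which is exactly the hypothesis) gives the bound $\|\nabla f\|_{H^s}\|\nabla g\|_{H^s}$. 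On the low part, $g_{lo}$ is bounded in every $H^\sigma$ by $\|\nabla g\|_{H^s}$ (its frequency support is away from $0$ only on the high part; on low frequencies $\widehat g$ is an $L^2$ function times $|\xi|^{-1}|\xi|$, and $\||\xi|^{-1}\widehat{\nabla g}\|_{L^2(|\xi|\lesssim1)}\lesssim\|\nabla g\|_{L^2}$ since $|\xi|^{-1}$ is locally $L^2$ in dimension $n\ge3$ — this is the place $n\ge3$ is used), so $\|(\nabla f)g_{lo}\|_{H^s}\lesssim\|\nabla f\|_{H^s}\|g_{lo}\|_{H^{n/2+}}\lesssim\|f\|_X\|g\|_X$. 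The term $\|f(\nabla g)\|_{H^s}$ is symmetric.

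\textbf{Step 2: the estimates into $H^s$ and $H^{s-1}$.} For $\|fg\|_{H^s}$ one again splits $f=f_{lo}+f_{hi}$. The high part satisfies $\|f_{hi}g\|_{H^s}\lesssim\|(\nabla f)g\|_{H^{s-1}}$ (absorbing a derivative, using $|\xi|^{-1}\lesssim\langle\xi\rangle^{-1}$ on $|\xi|\gtrsim1$), and then $H^s\cdot H^{s-1}\hookrightarrow H^{s-1}$ in the form $\|(\nabla f)g\|_{H^{s-1}}\lesssim\|\nabla f\|_{H^s}\|g\|_{H^{s}}$ — valid since $s+s-(s-1)=s+1>\frac n2$. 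The low part $f_{lo}$ lies in $H^{n/2+}$ with norm $\lesssim\|\nabla f\|_{L^2(|\xi|\lesssim1)}\lesssim\|f\|_X$ (again the dimension $n\ge3$ enters), and $\|f_{lo}g\|_{H^s}\lesssim\|f_{lo}\|_{H^{n/2+}}\|g\|_{H^s}$. The estimate for $\|fg\|_{H^{s-1}}$ is identical with $s$ replaced by $s-1$ in the last factor; the multiplication law needed is $H^s\cdot H^{s-1}\hookrightarrow H^{s-1}$, i.e. $s+(s-1)-(s-1)=s>\frac n2-1$... one still needs the $\nabla f$ version: $\|(\nabla f)g\|_{H^{s-2}}\lesssim\|\nabla f\|_{H^s}\|g\|_{H^{s-1}}$, requiring $s+(s-1)-(s-2)=s+1>\frac n2$, again exactly $s>\frac n2-1$.

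\textbf{Main obstacle.} The only genuine subtlety is the treatment of the \emph{low-frequency part of the factor measured in $X$}: because $\|\cdot\|_X$ controls only $\nabla f$ and not $f$, one cannot simply write $\|f_{lo}\|_{L^\infty}\lesssim\|f\|_X$ without an argument, and it is precisely here that $n\ge3$ is needed, so that $|\xi|^{-1}\in L^2_{loc}(\mathbb R^n)$ and hence $\widehat f\,\chi_{|\xi|\lesssim1}=|\xi|^{-1}\widehat{\nabla f}\,\chi_{|\xi|\lesssim1}\in L^1\cap L^2$, giving $f_{lo}\in L^\infty\cap H^\sigma$ for all $\sigma$ with the right bound. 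Everything else is a bookkeeping exercise in Littlewood–Paley decomposition plus the fixed multiplication law $H^{s_1}\cdot H^{s_2}\hookrightarrow H^{s_3}$ under $s>\frac n2-1$; I would state the low-frequency bound once as a sublemma and then apply it uniformly in all three inequalities.
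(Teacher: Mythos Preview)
Your approach is essentially the paper's own: reduce everything to Sobolev's multiplication law, with the sole subtlety being that the factor measured in $\|\cdot\|_X$ need not lie in $L^2$, and this is handled by the local integrability of $|\xi|^{-1}$ in dimension $n\ge3$. The paper's proof is in fact a single sentence to exactly this effect, so your sketch is a detailed elaboration of the intended argument.

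There is one small inaccuracy worth flagging. You assert that $\widehat f\,\chi_{|\xi|\lesssim1}\in L^1\cap L^2$ and hence $f_{lo}\in H^\sigma$ for every $\sigma$. The $L^1$ claim is correct (Cauchy--Schwarz with $|\xi|^{-1}\in L^2_{\mathrm{loc}}$ for $n\ge3$), but the $L^2$ claim is false in general: take $\widehat g(\xi)=|\xi|^{-n/2}\chi_{|\xi|\le1}$, for which $\nabla g\in H^s$ for all $s$ while $g\notin L^2$. Fortunately only the $L^1$ bound is actually needed. Since $\widehat{f_{lo}}$ is supported in $\{|\xi|\lesssim1\}$ and lies in $L^1$, convolution with it shifts frequencies by $O(1)$, so $\langle\xi\rangle^\sigma\sim\langle\xi-\eta\rangle^\sigma$ on its support and Young's inequality gives
\[
\|f_{lo}\,h\|_{H^\sigma}\;\lesssim\;\|\widehat{f_{lo}}\|_{L^1}\,\|h\|_{H^\sigma}\;\lesssim\;\|f\|_X\,\|h\|_{H^\sigma}
\]
for any $\sigma\in\mathbb R$. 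With this correction (and the parallel remark that for the high-frequency piece one should simply use $\|f_{hi}\|_{H^{s+1}}\lesssim\|\nabla f\|_{H^s}$ and then $H^{s+1}\cdot H^s\hookrightarrow H^s$, rather than the somewhat imprecise ``absorbing one derivative'' step), your argument goes through cleanly.
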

\begin{proof}
This follows essentially by Sobolev's multiplication law, where we remark that the singularity of $|\nabla|^{-1}$ is harmless in dimension $n \ge 3$.
\end{proof}

\begin{proof}[Proof of Lemma \ref{Lemma}]
This is achieved by an iteration argument. Assume that one has besides (\ref{***}):
\begin{equation}
\label{42}
\|A^{cf}(0)\|_{H^s} \le \delta
\end{equation}
for some $0<\delta\le \epsilon$.  In the first step we set $\delta=\epsilon$, so that the condition is fulfilled, in the next steps $\delta = \epsilon^{\frac{3}{2}}$ , $\delta=\epsilon^2$ etc. We use the Hodge decomposition of $A$:
$$A=A^{cf}+A^{df} = (-\Delta)^{-1} \nabla \,div \,A + A^{df}\, . $$
We define $V_1 := - (-\Delta)^{-1}div \,A(0)$ , so that $\nabla V_1 = A^{cf}(0)$. Thus $$ \|V_1\|_X := \| \nabla V_1\|_{H^s} = \|A^{cf}(0)\|_{H^s} \le \delta \, . $$ We define $U_1 := \exp(V_1)$ and consider the gauge transformation $T_1$ with
\begin{align*}
A_0 & \longmapsto U_1 A_0 U_1^{-1} - (\partial_t U_1) U_1^{-1} \\
A & \longmapsto U_1 A U_1^{-1} - (\nabla U_1) U_1^{-1} \\
\phi & \longmapsto U_1 \phi U_1^{-1} \, .
\end{align*}
Then $T_1$ preserves the temporal gauge, because $U_1$ is independent of $t$, a property, which is true for all the gauge transformations in the sequel as well. Moreover
\begin{align}
\nonumber
(T_1 A)(0) & = \exp V_1 A(0) \exp (-V_1) - \nabla(\exp V_1) \exp(-V_1) \\ \nonumber
& = A^{df}(0) + (\exp V_1 A^{df}(0) \exp(-V_1) - A^{df}(0)) \\ \label{40}
&  \hspace{1em} + (\exp V_1 A^{cf}(0) - \nabla(\exp V_1)) \exp(-V_1) 
\end{align}
and thus
\begin{align}
\nonumber
(T_1 A)^{cf}(0) 
& =-(-\Delta)^{-1} \nabla div(\exp V_1 A^{df}(0) \exp(-V_1) - A^{df}(0)) \\ \label{53'}
& \hspace{1em}-(-\Delta)^{-1} \nabla div ((\exp V_1 A^{cf}(0) - \nabla(\exp V_1)) \exp(-V_1)) \, . 
\end{align}
Using a Taylor expansion and Lemma \ref{Lemma} we obtain
\begin{align*}
 & \| \exp V_1 A^{df}(0) \exp(-V_1) - A^{df}(0)\|_{H^s} \\
 &\lesssim \|(\exp V_1-I) A^{df}(0) (\exp(-V_1)-I)\|_{H^s} 
+\|A^{df}(0) (\exp(-V_1)-I)\|_{H^s} \\
& \hspace{1em} + \|(\exp V_1-I)A^{df}(0) \|_{H^s} \\
 &  \lesssim (\|\exp V_1-I\|_X +1)\|A^{df}(0)\|_{H^s} \|\exp(-V_1)-I\|_X \\
& \hspace{1em}+ \|\exp V_1-I\|_X \|A^{df}(0)\|_{H^s} \\
 &  \lesssim(1+\delta) \epsilon \delta \\
& \le \frac{c_0}{2} \epsilon \delta \, .
\end{align*}
We used the estimate
\begin{align*}
\| \exp V_1 - I\|_X & \le \sum_{k=1}^{\infty} \frac{\|V_1^k\|_X}{k !} \le \sum_{k=1}^{\infty} \frac{(c_1 \|V_1\|_X)^k}{c_1 k !} = c_1^{-1}(\exp(c_1 \|V_1\|_X) - 1) \\
&\le c_1^{-1}(\exp(c_1 \delta) -1) \lesssim \delta \, . 
\end{align*}
Furthermore we obtain
\begin{align*}
&\|\exp V_1 A^{cf}(0) - \nabla(\exp V_1)) \|_{H^s} = \| \sum_{k=0}^{\infty} \frac{V_1^k}{k !} \nabla V_1 - \sum_{k=1}^{\infty} \frac{\nabla(V_1^k)}{k!} \|_{H^s} \\
& =\| \sum_{k=1}^{\infty} \frac{V_1^k}{k!}\nabla V_1 - \sum_{k=2}^{\infty} \frac{\nabla(V_1^k)}{k!} \|_{H^s} \lesssim \sum_{k=1}^{\infty} \frac{\|V_1^k\|_X}{k!} \|\nabla V_1\|_{H^s} + \sum_{k=2}^{\infty} \frac{\|\nabla(V_1^k)\|_{H^s}}{k!} \\
& \lesssim \sum_{k=1}^{\infty}\frac{c_1^k \|V_1\|_X^k}{ k!} \|\nabla V_1\|_{H^s} + \sum_{k=2}^{\infty} \frac{c_1^k \|V_1\|_X^k}{k!} \\
& \lesssim (\exp(c_1 \|V_1\|_X) - 1) \|\nabla V_1\|_{H^s}+( \exp (c_1 \|V_1\|_X) - 1 - c_1 \|V_1\|_X) \\
& \le \frac{c_0}{2} \delta^2 \, .
\end{align*}
These estimates imply bv (\ref{53'}) in the case $\delta = \epsilon  \ll 1$ :
\begin{equation}
\label{54'}
\|(T_1 A)^{cf}(0)\|_{H^s} \lesssim c_0 \epsilon \delta = c_0 \epsilon^2 \le \frac{1}{2} \epsilon^{\frac{3}{2}} \, .
\end{equation}
Moreover by (\ref{40})
\begin{equation} 
\label{**}
\|(T_1 A)(0)\|_{H^s} \le \|A^{df}(0)\|_{H^s} + c_0 \epsilon \delta \le \epsilon + \frac{1}{2} \epsilon^{\frac{3}{2}} \le 2 \epsilon \, ,
\end{equation}
and combining this with (\ref{54'}) :
\begin{equation} 
\label{56'}
\|(T_1 A)^{df}(0)\|_{H^s} \le \epsilon +  \epsilon^{\frac{3}{2}} \le 2 \epsilon \, .
\end{equation}
Similarly we also obtain by Lemma \ref{Lemma}
\begin{align*}
\|\partial_t(T_1 A)^{cf}(0)\|_{H^{s-1}} &\lesssim c_0 \epsilon \delta = c_0 \epsilon^2 \le \frac{1}{2} \epsilon^{\frac{3}{2}} \\
\|\partial_t (T_1 A)(0)\|_{H^{s-1}} &
\le \epsilon + \frac{1}{2}\epsilon^{\frac{3}{2}}  \\
\|\partial_t(T_1 A)^{df}(0)\|_{H^{s-1}} &\le \epsilon +  \epsilon^{\frac{3}{2}} \le 2 \epsilon \, ,
\end{align*}
and 
 $$\|\partial_t(T_1 \phi)(0)\|_{H^s} + \|(\partial_t T_1 \phi)(0)\|_{H^{s-1}} \le \epsilon + \frac{1}{2} \epsilon^{\frac{3}{2}} \, .$$
We have now shown that (\ref{***}) with $\epsilon$ replaced by $\epsilon+\frac{1}{2}\epsilon^{\frac{3}{2}}$ and (\ref{42}) with $\delta = \frac{1}{2}\epsilon^{\frac{3}{2}}$ are fulfilled with $A$ and $\phi$ replaced by $T_1 A$ and $T_1 \phi$.

In a next step we define $V_2 := -(-\Delta)^{-1} div (T_1 A)(0)$ so that $\nabla V_2 = (T_1A)^{cf}(0)$ and  thus by (\ref{54'}) 
\begin{equation}
\label{41}
\|V_2\|_X = \|\nabla V_2\|_{H^s} \le \epsilon^{\frac{3}{2}} \,. 
\end{equation}
We define the next gauge transform $T_2$ by
\begin{align*}
A & \longmapsto U_2 T_1A U_2^{-1} - \nabla U_2 U_2^{-1} \\
\phi & \longmapsto U_2 T_1 \phi U_2^{-1} 
\end{align*}
with $U_2 = \exp V_2$. \\
Calculating as above we obtain
\begin{align*}
(T_2A)(0) & = (T_1 A)^{df}(0) + (\exp V_2 (T_1A)^{df}(0) \exp(-V_2) - (T_1A)^{df}(0)) \\
& \hspace{1em} +( (\exp V_2 \nabla V_2  - \nabla(\exp V_2)) \exp(-V_2)) 
\end{align*}
where we used  $\nabla V_2 = (T_1A)^{cf}(0)$. This implies :
\begin{align*}
&\|(T_2 A)^{cf}(0)\|_{H^s} \\& \le c_2 (\|\exp V_2 (T_1 A)^{df}(0) (\exp(-V_2) - I)\|_{H^s} + \|(\exp V_2 - I) (T_1 A)^{df}(0)\|_{H^s} \\
& \hspace{1em} + \| ( (\exp V_2 \nabla V_2  - \nabla(\exp V_2)) \exp(-V_2))\|_{H^s})\, .
\end{align*}
The first two terms on the right hand side are bounded by  (\ref{41}) by
$$c_2((\exp(c_1 \|V_2\|_X )-1) + 1) 2\epsilon (\exp(c_1\|V_2\|_X) -1) \lesssim (\epsilon^{\frac{3}{2}} + 1) \epsilon \epsilon^{\frac{3}{2}} \lesssim \epsilon^{\frac{5}{2}} \le \frac{1}{4} \epsilon^2 \, , $$
where we used (\ref{56'}), whereas the last term on the right hand side can be handled similarly as in the first iteration step :
$$ c_2\| ( (\exp V_2 \nabla V_2  - \nabla(\exp V_2)) \exp(-V_2))\|_{H^s} \lesssim \epsilon^3 \le \frac{1}{4} \epsilon^2 \, . $$
This implies
$$ \|(T_2A)^{cf}(0)\|_{H^s} \le \frac{1}{2} \epsilon^2$$
and also
$$ \|(T_2A)(0)\|_{H^s} \le \|(T_1A)^{df}(0)\|_{H^s} + \frac{1}{2} \epsilon^2 \le \epsilon + \epsilon^{\frac{3}{2}}+ \frac{1}{2}\epsilon^2 \le 2\epsilon \, , $$
thus
$$\|(T_2A)^{df}(0)\|_{H^s} \le \epsilon + \epsilon^{\frac{3}{2}} + \epsilon^2 \le 2\epsilon \, . $$
Similar estimates are also obtained for $\|\partial_t(T_2A)^{cf}(0)\|_{H^{s-1}}$ , $  \|\partial_t(T_2A)(0)\|_{H^{s-1}}$ and $\|\partial_t (T_2A)^{df}(0)\|_{H^s}$ 
We also obtain
$$ \|(T_2 \phi)(0)\|_{H^s} + \|(\partial_t T_2 \phi)(0)\|_{H^{s-1}} \le \epsilon + \epsilon^{\frac{3}{2}} + \epsilon^2 \, . $$ 
We have now shown that (\ref{***}) with $\epsilon$ replaced by $\epsilon+\epsilon^{\frac{3}{2}}+\epsilon^2$ and (\ref{42}) with $\delta = \frac{1}{2}\epsilon^2$ are fulfilled with $A$ and $\phi$ replaced by $T_2 A$ and $T_2 \phi$ .

By iteration we obtain a sequence of gauge transforms $T_k$ defined by
\begin{align*}
A &\longmapsto \prod_{l=k}^1 \exp V_l A  \prod_{l=1}^k \exp(- V_l) - \nabla (\prod_{l=k}^1\exp V_l) \prod_{l=1}^k \exp(- V_l) \\
\phi &\longmapsto  \prod_{l=k}^1 \exp V_l \, \phi   \prod_{l=1}^k \exp(- V_l)
\end{align*}
with
$$ V_l := -(-\Delta)^{-1} div \,(T_{l-1}A)(0) \, $$
where $T_0 := id$. 
We remark that
$\nabla V_{k}= (T_{k-1} A)^{cf}(0)$. 
We now make the assumption that for some $k\ge 2$ we know that
$$\|(T_{k-1}A)^{df} (0)\|_{H^s} \le \epsilon + \epsilon^{\frac{3}{2}} + ... + \epsilon^{\frac{k+1}{2}} \le 2 \epsilon$$
and 
\begin{equation}
\label{58'}
\|V_k\|_X = \|(T_{k-1}A)^{cf} (0)\|_{H^s} \le \frac{1}{2} \epsilon^{\frac{k+1}{2}} \, . 
\end{equation}
This holds for the case $k=2$ as shown before.
Exactly as in the first two steps we obtain the estimate (with implicit constants independent of $k$ from now on) :
\begin{align*}
 &\|V_{k+1}\|_X = \|\nabla V_{k+1}\|_{H^s} = \|(T_k A)^{cf}(0)\|_{H^s} \\
&\lesssim ((\exp(c_1 \|V_k\|_X)-1) + 1) \|(T_{k-1}A)^{df}(0)\|_{H^s} (\exp(c_1 \|V_k\|_X -1) \\
&\hspace{1em} + (\exp(c_1\|V_k\|_X) -1)\|\nabla V_k\|_{H^s} + (\exp(c_1 \|V_k\|_X) -1 - c_1 \|V_k\|_X) \\
&\lesssim (\|(T_{k-1}A)^{cf}(0)\|_{H^s} + 1) \|(T_{k-1}A)^{df}(0)\|_{H^s} \|(T_{k-1}A)^{cf}(0)\|_{H^s} \\ 
&\hspace{1em}+  \|(T_{k-1}A)^{cf}(0)\|_{H^s} \|(T_{k-1}A)^{df}(0)\|_{H^s} + \|(T_{k-1}A)^{cf}(0)\|_{H^s}^2 \\
&\lesssim (\epsilon^{\frac{k+1}{2}}+1) 2\epsilon \epsilon^{\frac{k+1}{2}} + \epsilon^{\frac{k+1}{2}}\epsilon^{\frac{k+1}{2}}+\epsilon^{k+1} 
\lesssim \epsilon^{\frac{k+3}{2}} + \epsilon^{k+1} \le \frac{1}{2} \epsilon^{\frac{k}{2}+1}
\end{align*}
and
$$
\|(T_kA)(0)\|_{H^s} 
\le \|(T_{k-1}A)^{df}(0)\|_{H^s} + \frac{1}{2} \epsilon^{\frac{k}{2} +1} \le \epsilon + \epsilon^{\frac{3}{2}} + ... + \epsilon^{\frac{k+1}{2}} + \frac{1}{2} \epsilon^{\frac{k}{2}+1} \le 2\epsilon \, ,$$
thus
\begin{equation}
\label{CC}
 \|(T_kA)^{df}(0)\|_{H^s} \le \epsilon+\epsilon^{\frac{3}{2}} + ... + \epsilon^{\frac{k}{2}+1} \le 2 \epsilon \, . 
\end{equation}
Thus these estimates hold for any $k \ge 2$.
Similarly one can show that
\begin{align}
\nonumber
&\|(\partial_t T_kA)(0)\|_{H^{s-1}}+ \|(\partial_t T_kA)^{df}(0)\|_{H^{s-1}} +\|(T_k \phi)(0)\|_{H^s} +
\|(\partial_t T_k \phi)(0)\|_{H^{s-1}} \\ 
\label{CCC}
& \hspace{1em} \lesssim \epsilon \, .
\end{align}
Next we estimate
\begin{align*}
\|T_k A\|_{H^s} &\le \|(\prod_{l=k}^1 \exp V_l) A \prod_{l=1}^k \exp(-V_l)\|_{H^s} + \|\nabla(\prod_{l=k}^1 \exp V_l) \prod_{l=1}^k \exp(-V_l)  \|_{H^s} \\
& = I + II \, . 
\end{align*}
We further estimate
\begin{align*}
I &\le \|A\|_{H^s} + \|((\prod_{l=k}^1 \exp V_l)- I)  A ((\prod_{l=1}^k \exp(-V_l)-I)\|_{H^s} \\ &\hspace{1em} + \|  A ((\prod_{l=1}^k \exp(-V_l)-I)\|_{H^s} 
+ \|((\prod_{l=k}^1 \exp V_l)- I)  A \|_{H^s} \\
&= \|A\|_{H^s}+ I_1 + I_2 + I_3
\end{align*}
In order to control $I_1$ we consider first
\begin{align}
\nonumber
&\| \prod_{l=k}^1 \exp V_l - I \|_X = \|\prod_{l=k}^1 \sum_{n=0}^{\infty} \frac{V_l^n}{n!} - I \|_X = \| \sum_{m=1}^{\infty} \sum _{n_1+...+n_k = m} \prod_{l=k}^1 \frac{V_l^{n_l}}{n_l !} \|_X \\
\label{C}
&\lesssim  \sum_{m=1}^{\infty} \sum _{n_1+...+n_k = m} \prod_{l=k}^1 \frac{(c_1 \|V_l\|_X)^{n_l}}{n_l !}   = \prod_{l=k}^1 \exp(c_1 \|V_l\|_X) - 1 \\
\nonumber
&=\exp(\sum_{l=1}^k c_1 \|V_l\|_X) - 1 \lesssim \exp(\sum_{l=1}^k c_1 \epsilon^{\frac{l+1}{2}}) - 1 \lesssim \exp(2c_1 \epsilon) - 1 \lesssim \epsilon 
\end{align}
independently of $k$  where we used (\ref{58'}). Consequently
$$
I_1 \lesssim \|(\prod_{l=k}^1 \exp V_l)- I\|_X \| A\|_{H^s} \| (\prod_{l=1}^k \exp(-V_l)-I\|_X \lesssim \|A\|_{H^s} \epsilon^2\, . $$
Estimating $I_2$ and $I_3$ similarly we obtain
$$ I \lesssim \|A\|_{H^s} (1+ \epsilon^2 + \epsilon) \, .$$
Moreover
\begin{align*}
II & \le \| \nabla(\prod_{l=k}^1 \exp V_l - I)(\prod_{l=1}^k \exp(-V_l) - I ) \|_{H^s} + \|\nabla
(\prod_{l=k}^1 \exp V_l - I)\|_{H^s} \\
& \lesssim \|\prod_{l=k}^1 \exp V_l - I\|_X \|(\prod_{l=1}^k \exp(-V_l) - I ) \|_X + \|
(\prod_{l=k}^1 \exp V_l - I)\|_X \\
& \lesssim \epsilon^2 + \epsilon \, ,
\end{align*}
Summarizing we obtain with implicit constants which are independent of $k$ :
$$
\|T_k A\|_{H^s} \
 \lesssim \|A\|_{H^s} + \epsilon $$ 
Similarly we also obtain
$$
\|\partial_t( T_k A )\|_{H^{s-1}}  \lesssim \|\partial_t A\|_{H^{s-1}} $$
and
$$ \|T_k \phi\|_{H^s} \lesssim \|\phi\|_{H^s} \quad , \quad \|\partial_t (T_k \phi)\|_{H^{s-1}} \lesssim \|\partial_t \phi\|_{H^{s-1}} \, . $$
We want to consider the mapping $T$ defined by $TA = \lim_{k\to \infty} T_k A$ and $T\phi = \lim_{k\to \infty} T_k \phi$ , where the limit is taken in $C^0([0,1],H^s) \cap C^1([0,1],H^{s-1})$. This would imply by (\ref{58'}): $\|(TA)^{cf}(0)\|_{H^s} = \lim_{k \to \infty} \|(T_kA)^{cf}\|_{H^s} = 0$ , thus the desired property $$(TA)^{cf}(0)=0 \, .$$
Now define
$$ SA := \prod_{l=\infty}^1(\exp V_l)  A \prod_{l=1}^{\infty} \exp(-V_l)- \nabla (\prod_{l=\infty}^1 \exp V_l)  \prod_{1=1}^{\infty} \exp(-V_l) = UAU^{-1} - \nabla U U^{-1}\, ,$$
with $U:=\prod_{l=\infty}^1 \exp V_l$, where the limit is taken with respect to $\| \cdot \|_X$ . \\
This limit in fact exists, because by the calculations in (\ref{C}) we obtain for $N > k$ the estimate
$$ \| \prod_{l=N}^1 \exp V_l - \prod_{l=k}^1 \exp V_l \|_X \lesssim \|\prod_{l=N}^{k+1} \exp V_l - I \|_X (\|\prod_{l=k}^1 \exp V_l - I\|_X + 1)\lesssim \epsilon^{\frac{k}{2}+1} (\epsilon +1) \, . $$
We also obtain $U^{-1}=\prod_{l=1}^{\infty} \exp(-V_l)$, which is defined in the same way. \\
In order to prove $S=T$ we estimate as follows :
\begin{align*}
&\|SA-T_kA\|_{H^s} \\
& \le \|(\prod_{l=\infty}^1\exp V_l-\prod_{l=k}^1 \exp V_l) A \prod_{l=1}^{\infty}\exp(-V_l)\|_{H^s} \\
& + \|\prod_{l=k}^1\exp V_l A(\prod_{l=1}^{\infty}\exp(- V_l)-\prod_{l=1}^k \exp(- V_l))\|_{H^s} \\
& + \|\nabla(\prod_{l=\infty}^1(\exp V_l)-\prod_{l=k}^1 \exp V_l) (\prod_{l=1}^{\infty} \exp(- V_l)\|_{H^s} \\
& + \|\nabla(\prod_{l=k}^1\exp V_l(\prod_{l=1}^{\infty} \exp(- V_l) - \prod_{l=1}^k \exp(- V_l))\|_{H^s} \\
& = I + II + III + IV
\end{align*}
Now
\begin{align*}
I & =\| (\prod_{l=\infty}^{k+1} \exp V_l - I)  \prod_{l=k}^1 \exp V_l A \prod_{l=1}^{\infty} \exp(-V_l) \|_{H^s} \\
& \lesssim \|\prod_{l=\infty}^{k+1} \exp V_l - I \|_X  (\| \prod_{l=k}^1 \exp V_l -I\|_X + 1) \| A \prod_{l=1}^{\infty} \exp(-V_l)  \|_{H^s}\, .
\end{align*}
Now by (\ref{C}) we obtain
$$\| \prod_{l=k}^1 \exp V_l -I\|_X \lesssim \epsilon \, , $$
and
$$ \|A \prod_{l=1}^{\infty} \exp (-V_l) \|_{H^s} \lesssim \|A\|_{H^s} (1+\| \prod_{l=1}^{\infty} \exp (-V_l) -I\|_X)  \lesssim \|A\|_{H^s} (1+\epsilon) $$
and also similarly as in (\ref{C})
$$ \| \prod_{l=\infty}^{k+1} \exp(- V_l) -I\|_X \le \exp(\sum_{l=k+1}^{\infty} c_1 \epsilon^{\frac{k}{2}+1}) - 1 \lesssim \exp(c \epsilon^{\frac{k}{2}+1}) -1\lesssim \epsilon^{\frac{k}{2} + 1} \, $$
so that
$$ I + II \lesssim \epsilon^{\frac{k}{2}+1} \|A\|_{H^s} \, . $$
Next we estimate
\begin{align*}
III & \le  \|\nabla(\prod_{l=\infty}^1(\exp V_l)-\prod_{l=k}^1 \exp V_l) ((\prod_{l=1}^{\infty} \exp(- V_l)- I)\|_{H^s} \\
&\hspace{1em}+\|\nabla(\prod_{l=\infty}^1(\exp V_l)-\prod_{l=k}^1 \exp V_l)\|_{H^s} \\
& \lesssim \|I - \prod_{l=\infty}^{k+1} \exp V_l \|_X (\|\prod_{l=k}^1 \exp V_l - I\|_X + 1) (\|\prod_{l=1}^{\infty} \exp(-V_l) - I\|_X +1) \\
& \lesssim \epsilon^{\frac{k}{2}+1} (\epsilon +1)(\epsilon +1) \lesssim \epsilon^{\frac{k}{2}+1} \, .
\end{align*}	
Finally
\begin{align*}
IV &\le \|\nabla(\prod_{l=k}^1 \exp V_l- I) \prod_{l=1}^k \exp(- V_l) (I - \prod_{l=k+1}^{\infty} \exp(- V_l))\|_{H^s} \\
& \lesssim \|\prod_{l=k}^1 \exp V_l- I\|_X (\prod_{l=1}^k \exp(- V_l) -I\|_X + 1) \|I - \prod_{l=k+1}^{\infty} \exp(- V_l)\|_X \\
& \lesssim \epsilon  (\epsilon +1) \epsilon^{\frac{k}{2}+1} \lesssim \epsilon^{\frac{k}{2}+2} \, ,
\end{align*}
so that we obtain
$$ \|SA - T_kA\|_{H^s} \lesssim \epsilon^{\frac{k}{2}+1} (\|A\|_{H^s} + 1)\, \rightarrow \, 0
\quad (k \to \infty) \, , $$
thus $T_kA \to SA $ in $C^0([0,1],H^s)$ and similarly $\partial_t T_k A \to \partial_t SA$ in $C^0([0,1],H^{s-1})$ as well as $T_k \phi \to S\phi$ in $C^0([0,1],H^s)$ and $\partial_t T_k \phi \to \partial_t S\phi$ in $C^0([0,1],H^{s-1}) \,.$ We have shown that $T=S$ is a gauge transformation which besides fulfilling the temporal gauge has the property $(TA)^{cf}(0)=0$ and preserves the regularity $A,\phi \in C^0([0,1],H^s)\cap C^1([0,1],H^{s-1})$. 
From the properties (\ref{CC}) and (\ref{CCC}) of $T_k$ we also deduce
$$ \|(TA)^{df}(0)\|_{H^s} + \|(\partial_t TA)^{df}(0)\|_{H^{s-1}} + \|(T\phi)(0)\|_{H^s} + \|(\partial_t T\phi)(0)\|_{H^{s-1}} \lesssim \epsilon \, . $$
Assume now that
$ A = A_- + A_+ + A' $ , where $A_{\pm} \in X^{s,\frac{3}{4}+}_{\pm}[0,1]$ , $A' \in X^{s+\alpha,\frac{1}{2}+}_{\tau=0}[0,1]$ and $\partial_t A' \in C^0([0,1],H^{s-1})$ . Let $$TA=U A U^{-1} -\nabla U U^{-1} \, ,$$ where $U = \prod_{l=\infty}^1 \exp V_l$ , is defined as above. We want to show that $TA$ has the same regularity. Let $\psi=\psi(t)$ be a smooth function with $\psi(t)=1$ for $0 \le t \le 1$ and $\psi(t)=0$ for $t\ge 2$. Then we obtain by Lemma \ref{Lemma1} below and (\ref{C}) :
\begin{align*}
\|U A_{\pm} \psi\|_{X^{s,\frac{3}{4}+}_{\pm}} & \lesssim \|\nabla U \psi\|_{X^{s,1}_{\pm}} \|A_{\pm}\|_{X^{s,\frac{3}{4}+}_{\pm}} \lesssim \|\nabla U\|_{H^s} \|A_{\pm}\|_{X^{s,\frac{3}{4}+}_{\pm}} \\
&\lesssim \| U -I\|_X \|A_{\pm}\|_{X^{s,\frac{3}{4}+}_{\pm}} \lesssim \epsilon \|A_{\pm}\|_{X^{s,\frac{3}{4}+}_{\pm}} \, ,
\end{align*}
thus
$$ \|U A_{\pm}\|_{X^{s,\frac{3}{4}+}_{\pm}[0,1]} \lesssim \epsilon \|A_{\pm}\|_{X^{s,\frac{3}{4}+}_{\pm}[0,1]} \, . $$
Similarly we obtain
$$ \|U A_{\pm} U^{-1}\|_{X^{s,\frac{3}{4}+}_{\pm}[0,1]} \lesssim \epsilon \|U A_{\pm}\|_{X^{s,\frac{3}{4}+}_{\pm}}      \lesssim \epsilon^2\|A_{\pm}\|_{X^{s,\frac{3}{4}+}_{\pm}[0,1]} < \infty \, . $$
We also have
\begin{align*}
&\|(\nabla U)\psi U^{-1}\psi \|_{X^{s,\frac{3}{4}+}_{\pm}} \lesssim \|\nabla U \psi\|_{X^{s,\frac{3}{4}+}_{\pm}} \|\nabla ( U^{-1}) \psi\|_{X^{s,1}_{\pm}} \lesssim \|\nabla U\|_{H^s} \|\nabla(U^{-1})\|_{H^s} \, ,                                          
\end{align*}
thus
\begin{align*}
&\|(\nabla U) U^{-1}\|_{X^{s,\frac{3}{4}+}_{\pm}[0,1]} \lesssim \|\nabla U\|_{H^s} \|\nabla(U^{-1})\|_{H^s} \, .                                          
\end{align*}
Moreover by Sobolev we obtain
\begin{align*}
\|U A' \psi\|_{X^{s+\alpha,\frac{1}{2}+}_{\tau=0}} & \lesssim \| \nabla(U) \psi\|_{X^{s,1}_{\tau=0}} \|A'\|_{X^{s+\alpha,\frac{1}{2}+}_{\tau=0}} \\
& \lesssim \| \nabla U \|_{H^s} \|A'\|_{X^{s+\alpha,\frac{1}{2}+}_{\tau=0}} \lesssim \epsilon  \|A'\|_{X^{s+\alpha,\frac{1}{2}+}_{\tau=0}} \, . 
\end{align*}
Similarly as before this implies
$$ \|U  A' U^{-1}\|_{X^{s+\alpha,\frac{1}{2}+}_{\tau=0}[0,1]} \lesssim \epsilon^2\|A'\|_{X^{s+\alpha,\frac{1}{2}+}_{\tau=0}[0,1]} < \infty \, . $$
By Sobolev's muliplication law we also obtain
$$ \|U \partial_t A'\|_{C^0([0,1],H^{s-1})} \lesssim \|\nabla U\|_{H^s} \|\partial_t A'\|_{C^0([0,1],H^{s-1})} \lesssim \epsilon \|\partial_t A'\|_{C^0([0,1],H^{s-1})} \, . $$
As before this implies
$$\|U \partial_tA' U^{-1}\|_{C^0([0,1],H^{s-1})} \lesssim \epsilon^2 \|\partial_t A'\|_{C^0([0,1],H^{s-1})} < \infty \, .$$
We have thus shown that $TA$ has the same regularity as $A$.
The same estimates also show that
$$ \|U \phi_{\pm} U^{-1}\|_{X^{s,\frac{3}{4}+}_{\pm}[0,1]}      \lesssim \epsilon^2\|\phi_{\pm}\|_{X^{s,\frac{3}{4}+}_{\pm}[0,1]} < \infty \, , $$
so that $T\phi = U \phi U^{-1}$ maps $X^{s+\frac{1}{4},\frac{1}{2}+}_+ + X^{s+\frac{1}{4},\frac{1}{2}+}_-$ into itself. The same properties also hold for its inverse $T^{-1}$ which is given by
\begin{align*}
B &\longmapsto U^{-1} B U + U^{-1} \nabla U \\
\phi' &\longmapsto U \phi' U^{-1} \, .
\end{align*}
\end{proof}

In the last proof we used the following
\begin{lemma}
\label{Lemma1}
The following estimate holds for $s>\frac{n}{2}-\frac{3}{4}$ and $\epsilon >0$ sufficiently small:
$$ \|uv\|_{X^{s,\frac{3}{4}+\epsilon}_{\pm}} \lesssim \|\nabla u\|_{X^{s,1}_{\pm}} \|v\|_{X^{s,\frac{3}{4}+\epsilon}_{\pm}} \, . $$
\end{lemma}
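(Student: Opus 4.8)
The plan is to reduce the asserted product estimate to bilinear $L^2_{t,x}$ product estimates for wave--Sobolev functions, which are handled by \cite{FK}, Thm.~1.1 (the same input already used in the proof of (\ref{16})) together with the Strichartz estimates of Prop.~\ref{Prop.2}. On the Fourier side, writing $\xi_0=\xi_1+\xi_2$, $\tau_0=\tau_1+\tau_2$ and $\sigma_i=\tau_i\mp|\xi_i|$, the essential ingredient is the elementary identity
\[
\tau_0\mp|\xi_0| \,=\, \sigma_1+\sigma_2 \,\pm\, \big(|\xi_1|+|\xi_2|-|\xi_1+\xi_2|\big),
\]
with $0\le |\xi_1|+|\xi_2|-|\xi_1+\xi_2|\le 2\min(|\xi_1|,|\xi_2|)$. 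This identity holds precisely because $u$, $v$ and the product all carry the same sign $\pm$, and it yields $\langle\tau_0\mp|\xi_0|\rangle^{\frac34+\epsilon}\lesssim\langle\sigma_1\rangle^{\frac34+\epsilon}+\langle\sigma_2\rangle^{\frac34+\epsilon}+\langle\min(|\xi_1|,|\xi_2|)\rangle^{\frac34+\epsilon}$. (For non-matching signs the last term would be replaced by the far larger $\langle\max_i|\xi_i|\rangle^{\frac34+\epsilon}$, which is why the matching-sign assumption is indispensable.) Combined with $\langle\xi_0\rangle^s\lesssim\langle\xi_1\rangle^s+\langle\xi_2\rangle^s$ (valid since $s>0$), this reduces $\|uv\|_{X^{s,\frac34+\epsilon}_\pm}$ to a finite sum of $L^2_{t,x}$ estimates for products of modified copies of $u$ and $v$.

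The terms carrying $\langle\sigma_1\rangle^{\frac34+\epsilon}$, resp.\ $\langle\sigma_2\rangle^{\frac34+\epsilon}$, are the easy ones: the first weight is absorbed into the modulation weight $\langle\sigma_1\rangle^{1}$ of $\nabla u$ (leaving $\frac14-\epsilon$ to spare), the second into $v$'s own weight $\langle\sigma_2\rangle^{\frac34+\epsilon}$. What remains is an $L^2_{t,x}$ product of a factor in some $X^{s',b'}_\pm$ with $b'>\frac12$ (hence in $L^\infty_t H^{s'}_x$) and a factor in $X^{s'',0}_\pm=L^2_t H^{s''}_x$; H\"older in time and Sobolev's multiplication law close these, and only $s>\frac n2-1$ is needed, since the derivative in $\nabla u$ makes $u$ effectively of regularity $s+1>\frac n2$. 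As usual, the singularity of $|\nabla|^{-1}$ at low frequency is harmless for $n\ge3$ and may be replaced by $\langle\nabla\rangle^{-1}$ (\cite{T}, Cor.~8.2).

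The decisive term is the one carrying the extra frequency factor $\langle\min(|\xi_1|,|\xi_2|)\rangle^{\frac34+\epsilon}$. I would move this weight onto whichever of $u$, $v$ has the smaller spatial frequency and pay for it out of the single surplus derivative furnished by $\nabla u$, which is possible since $\frac34+\epsilon<1$; what then remains is a bilinear $L^2_t H^{s'}_x$ product estimate for two wave--Sobolev functions, of exactly the type of \cite{FK}, Thm.~1.1 (using $\|\cdot\|_{X^{s,b}_{|\tau|=|\xi|}}\le\|\cdot\|_{X^{s,b}_\pm}$ for $b\ge0$ to pass from the half-wave to the wave-Sobolev spaces), which holds precisely under $s>\frac n2-\frac34$. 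I expect this term to be the main obstacle: it is the only place where Sobolev multiplication alone is insufficient — a direct count would require $s\ge\frac n2-\frac14$ — and where the transversality/null structure encoded in the wave--Sobolev product estimate is genuinely needed, and it is this term that pins down the threshold $s>\frac n2-\frac34$ of the lemma. All frequency regimes (high--high, high--low, balanced) have to be checked, but only the balanced and small-modulation interactions are non-trivial, with dimension-dependent book-keeping entirely analogous to the proofs of (\ref{28})--(\ref{32}).
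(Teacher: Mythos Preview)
Your proposal is correct and follows essentially the same route as the paper: after replacing $\nabla$ by $\langle\nabla\rangle$ via \cite{T}, Cor.~8.2, the paper uses the same modulation identity $|\tau_0\mp|\xi_0||\le|\sigma_1|+|\sigma_2|+2\min(|\xi_1|,|\xi_2|)$ to reduce to three $X^{s,0}_\pm$ product estimates, disposes of the first two by Sobolev (your ``easy'' terms), and handles the $\min$-term by absorbing $\min(|\xi_1|,|\xi_2|)^{\frac34+\epsilon}\le|\xi_1|^{\frac34+\epsilon}$ into $u$'s weight and invoking \cite{FK}, Thm.~1.1, which is exactly where $s>\frac n2-\frac34$ enters. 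Your mention of Strichartz (Prop.~\ref{Prop.2}) and the additional splitting $\langle\xi_0\rangle^s\lesssim\langle\xi_1\rangle^s+\langle\xi_2\rangle^s$ are unnecessary here, and note that since $\min(|\xi_1|,|\xi_2|)\le|\xi_1|$ trivially, you can always place the extra $\frac34+\epsilon$ derivatives on $u$ regardless of which factor has the smaller frequency.
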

\begin{proof}
By Tao \cite{T}, Cor. 8.2 we may replace $\nabla$ by $\langle \nabla \rangle$ so that it suffices to prove
$$  \|uv\|_{X^{s,\frac{3}{4}+\epsilon}_{\pm}} \lesssim \| u\|_{X^{s+1,1}_{\pm}} \|v\|_{X^{s,\frac{3}{4}+\epsilon}_{\pm}} \, . $$
We start with the elementary estimate
$$|(\tau_1 + \tau_2)\mp |\xi_1+\xi_2|| \le |\tau_1 \mp |\xi_1|| + |\tau_2 \mp |\xi_2|| + |\xi_1| + |\xi_2| - |\xi_1+\xi_2| \, . $$
Assume now w.l.o.g. $|\xi_2|\ge|\xi_1|$. We have 
$$|\xi_1|+|\xi_2|-|\xi_1+\xi_2| \le |\xi_1|+|\xi_2| + |\xi_1| - |\xi_2| =  2|\xi_1| \, ,$$
so that $$|(\tau_1 + \tau_2)\mp |\xi_1+\xi_2||  \le |\tau_1 \mp\xi_1| + |\tau_2 \mp |\xi_2|| + 2\min(|\xi_1|,|\xi_2|) \, . $$
Using Fourier transforms by standard arguments it thus suffices to show the following three estimates:
\begin{align*}
\|uv\|_{X_{\pm}^{s,0}} & \lesssim \|u\|_{X^{s+1,\frac{1}{4}-\epsilon}_{\pm}} \|v\|_{X^{s,\frac{3}{4}+\epsilon}_{\pm}} \\
\|uv\|_{X_{\pm}^{s,0}} & \lesssim \|u\|_{X^{s+1,1}_{\pm}} \|v\|_{X^{s,0}_{\pm}} \\
\|uv\|_{X_{\pm}^{s,0}} & \lesssim \|u\|_{X^{s+\frac{1}{4}-\epsilon,1}_{\pm}} \|v\|_{X^{s,\frac{3}{4}+\epsilon}_{\pm}} 
\end{align*}
The first and second estimate easily follow from Sobolev, whereas the last one is implied by \cite{FK} , Thm. 1.1.  
\end{proof}

\section{Proof of Theorem \ref{Theorem1} and Theorem \ref{Theorem1'}}
\begin{proof}
We only prove Theorem \ref{Theorem1}.
It suffices to construct a unique local solution of (\ref{3}),(\ref{4}),(\ref{5}) with initial conditions
$$ A^{df}(0) = a^{df}  \, , \, (\partial_t A^{df})(0) = {a'}^{df}  \, , \,
A^{cf}(0) = a^{cf} \, , \,\phi(0)=\phi_0 \, , \, (\partial_t \phi)(0) = \phi_1 \, ,$$
which fulfill
$$ \|A^{df}(0)\|_{H^s} + \|(\partial_t A)^{df}(0)\|_{H^{s-1}} + \|A^{cf}(0)\|_{H^s} + \|\phi(0)\|_{H^s} + \|(\partial_t \phi)(0)\|_{H^{s-1}} \le \epsilon $$
for a sufficiently small $\epsilon > 0$.
By Lemma \ref{Lemma} there exists a gauge transformation $T$ which fulfills (\ref{T1}) and $(TA)^{cf}(0) =0$. We use Prop. \ref{Prop} to construct a unique solution $(\tilde{A},\tilde{\phi})$ of (\ref{3}),(\ref{4}),(\ref{5}) , where $\tilde{A}=\tilde{A}_+^{df} + \tilde{A}_-^{df} +\tilde{A}^{cf}$ and $\tilde{\phi} = \tilde{\phi}_+ + \tilde{\phi}_-$ ,  with data
$$\tilde{A}^{df}(0)= (TA)^{df}(0) \, , \, (\partial_t \tilde{A})^{df}(0) = (\partial_t (TA)^{df})(0) \, , \, \tilde{A}^{cf}(0)= (TA)^{cf}(0)=0 \, ,$$ 
$$ \, \tilde{\phi}(0) = (T\phi)(0) \, , \, (\partial_t \tilde{\phi})(0) = (\partial_t T \phi)(0) $$
with the regularity
$$ \tilde{A}^{df}_{\pm} \in X^{s,\frac{3}{4}+}_{\pm}[0,1]  ,  \tilde{A}^{cf} \in X^{s+\alpha,\frac{1}{2}+}_{\tau=0}[0,1]  ,  \partial_t \tilde{A}^{cf} \in C^0([0,1],H^{s-1})  , 
 \tilde{\phi}_{\pm} \in  X^{s,\frac{3}{4}+}_{\pm}[0,1] \, . $$
This solution satisfies also $\tilde{A},\tilde{\phi} \in C^0([0,1],H^s) \cap C^1[0,1],H^{s-1})$.

Applying the inverse gauge transformation $T^{-1}$ according to Lemma \ref{Lemma} we obtain a unique solution of (\ref{3}),(\ref{4}),(\ref{5}) with the required initial data and also the same regularity. 

 The proof of Theorem \ref{Theorem1'} is completely analogous by use of Prop. \ref{Prop'}.
\end{proof}

\end{document}